\numberwithin{equation}{section}
\numberwithin{figure}{section}
\newtheorem{theorem}{Theorem}[section]
\newtheorem{lemma}[theorem]{Lemma}
\newtheorem{corollary}[theorem]{Corollary}
\newtheorem{proposition}[theorem]{Proposition}
\theoremstyle{definition}
\newtheorem{remark}[theorem]{Remark}
\newtheorem{remarks}[theorem]{Remarks}
\newtheorem{examples}[theorem]{Examples}
\def\Z{\ensuremath{\mathbb{Z}}}
\def\R{\ensuremath{\mathbb{R}}}
\newcommand{\pa}[1]{\left(#1\right)}
\newcommand{\cpa}[1]{\left\{#1\right\}}
\newcommand{\wt}[1]{\widetilde{#1}}
\newcommand{\tn}[1]{\textnormal{#1}}
\newcommand{\br}[1]{\left[#1\right]}
\newcommand{\Int}[1]{\tn{Int} \, #1}
\newcommand{\card}[1]{\left| #1 \right|}
\font\cuf=cmtt8
\newcommand{\curl}[1]{{\cuf #1}}
\begin{document}
\title{Structure and classification of torus theta-curves}

\author[J.~Calcut]{Jack S. Calcut}
\address{Department of Mathematics\\
         Oberlin College\\
         Oberlin, OH 44074}
\email{jcalcut@oberlin.edu}
\urladdr{\href{https://www2.oberlin.edu/faculty/jcalcut/}{\curl{https://www2.oberlin.edu/faculty/jcalcut/}}}

\author[S.~Nieman]{Samantha E. Nieman}
\address{Department of Mathematics, Statistics, and Computer Science\\
         University of Illinois Chicago\\
         Chicago, Illinois 60607}
\email{sniem@uic.edu}

\makeatletter
\@namedef{subjclassname@2020}{%
  \textup{2020} Mathematics Subject Classification}
\makeatother

\keywords{Theta-curve, prime, torus knot, constituent knot.}
\subjclass[2020]{57K10; Secondary 57M15 and 05C10.}
\date{\today}

\begin{abstract}
We study theta-curves embedded in a standard torus in the 3-sphere.
We show that each nontrivial torus knot together with an essential arc determines a prime theta-curve,
yielding explicit infinite families of prime theta-curves.
We compute their constituent knots and identify the structure governing these embeddings,
which leads to a complete classification of torus theta-curves up to ambient isotopy and homeomorphism of the 3-sphere.
In particular, Kinoshita’s theta-curve does not lie on a standard torus.
\end{abstract}

\maketitle

\section{Introduction}

A natural way to construct a theta-curve is to adjoin an arc to a knot.
We study this construction in the setting of torus knots and arcs on a standard torus in the $3$-sphere.
Our first main result shows that each nontrivial torus knot together with an essential arc determines a prime theta-curve.
This produces explicit infinite families of prime theta-curves.

Classical knot theory is the placement problem for a circle
in Euclidean $3$-space or in the $3$-sphere.
That problem has been generalized in numerous ways.
Knotted graph theory replaces the circle with a graph.
Knotting of certain graphs has been well-studied.
That includes the theta-graph consisting of two vertices
connected by three edges---a copy of the letter $\theta$.
A \textbf{theta-curve} is an embedding of the theta-graph in Euclidean $3$-space or in the $3$-sphere.
Connected sum of knots yields the notion of a prime knot.
Similarly, one may sum two theta-curves at a vertex and one may sum a theta-curve and a knot along an edge---precise definitions
are given below.
Those two operations yield the notion of a prime theta-curve.
The work herein was prompted by the question:
\emph{how can one add an arc to a prime knot to obtain a prime theta-curve?}

Simple examples show that care must be taken.
In Figure~\ref{fig:trefoilarc} (left), the added arc is topologically parallel to an arc in the knot $K$.
\begin{figure}[htbp!]
    \centerline{\includegraphics[scale=1.0]{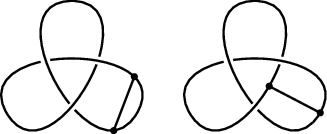}}
    \caption{Trefoil knot $K$ union an inessential arc (left) and $K$ union an essential arc (right).}
\label{fig:trefoilarc}
\end{figure}
The resulting theta-curve is a sum of the unknotted theta-curve and the knot $K$---see also Figure~\ref{fig:sum2} below---and thus is not prime.
On the other hand, adding the arc as in Figure~\ref{fig:trefoilarc} (right)
does produce a prime theta-curve by Theorem~\ref{thm:prime} below.
Thus, the choice of arc plays a decisive role in determining primeness.

Torus knots---by virtue of their definition as lying on a standard torus $T$---are particularly
amenable to study and enjoy several nice properties including: they are prime and invertible (see Figure~\ref{fig:torus} and the discussion below it).
Thus, it is natural to consider theta-curves that lie on a standard torus $T$, which we call \textbf{torus theta-curves}.
A first step towards our motivating question is to consider a torus theta-curve that is
the union of a nontrivial torus knot $K$ and an arc on $T$ that is essential in the annulus
obtained by cutting $T$ along $K$ as in Figure~\ref{fig:torustc}.
\begin{figure}[htbp!]
    \centerline{\includegraphics[scale=1.0]{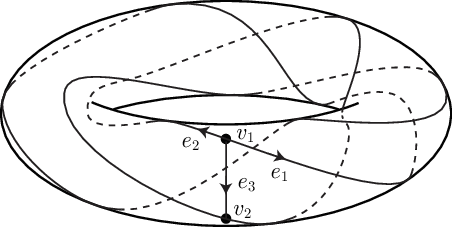}}
    \caption{Theta-curve $\theta(3,5)$ equal to the torus knot $t(3,5)$ union the arc $e_3$.}
\label{fig:torustc}
\end{figure}

We compute the constituent knots of these torus theta-curves
and use this information to obtain a complete classification of torus theta-curves
up to both ambient isotopy and homeomorphism allowing reflections.
In particular, we show that Kinoshita’s theta-curve does not lie on a standard torus.

The key feature underlying these results is the structure imposed by the torus.
Cutting the torus along a torus knot yields an annulus,
and essential arcs in this annulus have a simple and controlled topology.
This control governs both the primeness result and the classification,
allowing the geometry of the torus to constrain the possible embeddings of theta-curves.

We include examples as well as proofs of some general facts on theta-curves for their further study.
Consecutive Fibonacci numbers give rise to a structured infinite sequence of prime torus theta-curves.
Using classical Fibonacci identities, we prove that their constituent knots are
torus knots determined by consecutive Fibonacci numbers---see Examples~\ref{ex:ck}.

As the theta-graph is one of the simplest graphs after the circle, its embeddings in $3$-space have been well-studied
and arise in various contexts.
Kinoshita~\cite{kinoshita58,kinoshita72} discovered the first knotted theta-curve with all three constituent knots unknotted.
Wolcott~\cite{wolcott} studied and generalized Kinoshita's theta-curve and studied the sum of two theta-curves at a vertex.
Kauffman, Simon, Wolcott, and Zhao~\cite{kauffman} studied surfaces associated to theta-curves.
Theta-curves arose naturally in Scharlemann's work~\cite{scharlemann} on knots with both tunnel number one and genus one.
Moriuchi~\cite{moriuchi} enumerated theta-curves with up to seven crossings.
Matveev and Turaev~\cite{matveevturaev} and Turaev~\cite{turaev} introduced and studied knotoids which are intimately related to theta-curves.
The first author and Metcalf-Burton~\cite{cmb} proved a folklore conjecture of Thurston on primeness of theta-curves and double branched covers.
Baker, Buck, Moore, O'Donnol, and Taylor~\cite{taylor}
studied primeness of unknotting number one theta-curves and discussed the appearance of theta-curves in DNA replication.

The results of this paper show that, in contrast to the complexity of general theta-curves,
those lying on a torus admit a simple structural description in terms of torus knot data,
which makes their classification tractable.

This paper is organized as follows.
Section~\ref{sec:conv} recalls background material and fixes definitions.
Section~\ref{sec:prime} proves our first main result, Theorem~\ref{thm:prime}, on primeness of certain torus theta-curves.
Section~\ref{sec:ck} computes the constituent knots of those torus theta-curves, presents examples, and classifies those torus theta-curves.
Section~\ref{sec:cttcs} proves our second main result, Theorem~\ref{thm:classttc}, which classifies all torus theta-curves
up to homeomorphism of $S^3$---allowing reflections---and up to ambient isotopy.
While our focus is the classification of unoriented and unlabeled theta-curves in $S^3$, orientations turn out to play a key role.
For that reason, care with orientations is taken throughout.

\section{Background and definitions}\label{sec:conv}

We work in the smooth category $\textsc{diff}$, although it's convenient at times to utilize
the piecewise linear \textsc{pl} and topological (locally flat) \textsc{top} categories.
Those three categories are equivalent in our relevant dimensions $\leq3$.
As usual, $D^n$ denotes the closed unit disk in $\R^n$.
The boundary of $D^n$ is the $(n-1)$-sphere denoted $\partial{D^n}=S^{n-1}$.
We fix an orientation on $S^3$.
Each embedded $3$-disk $B\subseteq S^3$ is oriented as a codimension-zero submanifold of $S^3$,
and its boundary $2$-sphere $\partial B$ is given the boundary orientation using the outward normal first convention.
Using terminology of Hirsch~\cite[pp.~30--31]{hirsch},
a manifold $A$ embedded in a manifold $B$ is \textbf{neat} provided
$A\cap \partial{B}=\partial{A}$ and that intersection is transverse.
If $A$ has empty boundary, then \textbf{neat} means that $A$ is embedded in the manifold interior of $B$.

The closed unit interval is $\br{0,1}\subseteq\R$.
An \textbf{arc} is a copy of $\br{0,1}$.
A \textbf{simple closed curve}---denoted \textbf{scc}---is a copy of $S^1$.
In this paper, \textbf{surfaces} are compact and orientable, perhaps with boundary.
A scc $C$ in the manifold interior of a surface $\Sigma$ is \textbf{inessential} in $\Sigma$
provided $C$ bounds an embedded $2$-disk\footnote{Such a $2$-disk is unique unless $\Sigma$ is the $2$-sphere.}
or $C$ union a boundary component of $\Sigma$ bound an embedded annulus in $\Sigma$ as in Figure~\ref{fig:iness} (left).
\begin{figure}[htbp!]
    \centerline{\includegraphics[scale=1.0]{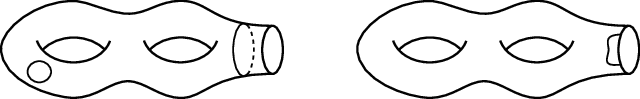}}
    \caption{Two inessential sccs in a surface $\Sigma$ with one boundary component (left) and
		an inessential arc in $\Sigma$ (right).}
\label{fig:iness}
\end{figure}
Otherwise, $C$ is \textbf{essential} in $\Sigma$.
A scc $C$ in a connected surface $\Sigma$ is \textbf{separating} provided $\Sigma-C$ is disconnected
and is \textbf{nonseparating} provided $\Sigma-C$ is connected as in Figure~\ref{fig:essscc}.
\begin{figure}[htbp!]
    \centerline{\includegraphics[scale=1.0]{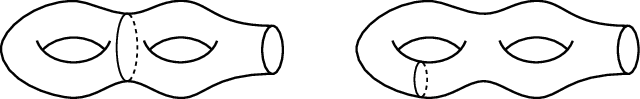}}
    \caption{Separating essential scc (left) and nonseparating essential scc (right).}
\label{fig:essscc}
\end{figure}
An arc $a$ neatly embedded in a surface $\Sigma$ is \textbf{inessential} in $\Sigma$ provided
$a$ union an arc $b$ in the boundary of $\Sigma$ bound an embedded $2$-disk in $\Sigma$ as in Figure~\ref{fig:iness} (right).
Otherwise, $a$ is \textbf{essential} in $\Sigma$.
A neatly embedded arc $a$ in a connected surface $\Sigma$ is \textbf{separating} provided $\Sigma-a$ is disconnected
and is \textbf{nonseparating} provided $\Sigma-a$ is connected as in Figure~\ref{fig:essarc}.
\begin{figure}[htbp!]
    \centerline{\includegraphics[scale=1.0]{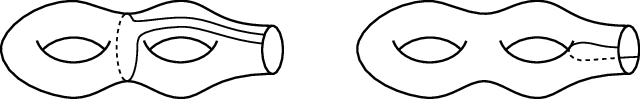}}
    \caption{Separating essential arc (left) and nonseparating essential arc (right).}
\label{fig:essarc}
\end{figure}
Each neatly embedded inessential arc is separating.

The \textbf{theta-graph} is the graph in Figure~\ref{fig:thetagraph}.
\begin{figure}[htbp!]
    \centerline{\includegraphics[scale=1.0]{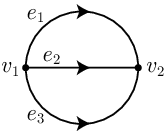}}
    \caption{The theta-graph.}
\label{fig:thetagraph}
\end{figure}
Each edge is oriented and points from $v_1$ to $v_2$.
A \textbf{theta-curve} is an embedding of the theta-graph in $S^3$.
We are interested in theta-curves up to ambient isotopy and up to homeomorphism of $S^3$
allowing reflections where we ignore graph labelings and graph orientations.
Nevertheless, those labelings and orientations are useful for calculations after which they may be forgotten.
Every theta-curve $\theta$ contains three \textbf{constituent knots} $k_1$, $k_2$, and $k_3$ where each is
obtained by ignoring one of the three edges of $\theta$.
\begin{figure}[htbp!]
    \centerline{\includegraphics[scale=1.0]{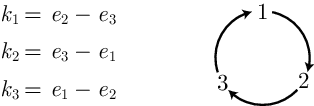}}
    \caption{Constituent knot conventions.}
\label{fig:kconv}
\end{figure}
More precisely, $k_i=e_{i+1}-e_{i+2}$ using the cyclic order in Figure~\ref{fig:kconv}.

In more detail, consider---possibly oriented---knots $K$ and $K'$ in $S^3$.
Let $K=K'$ indicate that $K$ and $K'$ are equal as sets.
Let $K\sim K'$ indicate that an ambient isotopy of $S^3$ carries $K$ to $K'$.
Let $K\approx K'$ indicate that a homeomorphism of $S^3$---perhaps reversing orientation of $S^3$---carries $K$ to $K'$.
If the knots $K$ and $K'$ are oriented, then those three equivalence relations are assumed to respect the orientations
of $K$ and $K'$ unless context dictates otherwise.
Evidently, $K=K'$ implies $K\sim K'$, and $K\sim K'$ implies $K\approx K'$.

If $K\subseteq S^3$ is an oriented knot,
then $-K$ denotes the \textbf{reverse knot} of $K$---namely, the same knot with the reverse orientation.
An oriented knot $K$ is \textbf{invertible} provided it is ambient isotopic to its reverse.
Evidently, the classifications---up to ambient isotopy of $S^3$ and
separately up to homeomorphism of $S^3$ allowing reflections---of invertible knots regarded as oriented or unoriented knots coincide.
That is relevant here since torus knots are invertible.


We use those same three equivalence relations---equality, isotopy, and homeomorphism---for theta-curves,
except we always ignore graph labelings and graph orientations on theta-curves.
In particular, $\theta=\theta'$ implies $\theta\sim\theta'$, and $\theta\sim\theta'$ implies $\theta\approx \theta'$.

Let $U\subseteq S^3$ denote the unknot.
A knot $K\subseteq S^3$ is \textbf{trivial} or \textbf{unknotted} provided $K\sim U$ or equivalently $K\approx U$.
Otherwise, $K$ is \textbf{nontrivial} or \textbf{knotted}.
A theta-curve $\theta$ is \textbf{trivial} or \textbf{unknotted} provided it lies in a $2$-sphere embedded in $S^3$,
and then we denote it by $\theta_U$.
Otherwise, $\theta$ is \textbf{nontrivial} or \textbf{knotted}.
All trivial theta-curves are isotopic in $S^3$.

The constituent knots of theta-curves are important invariants.
An isotopy of $S^3$ carrying one theta-curve to another carries the unoriented constituent knots of the former to the latter.
That also holds for any homeomorphism of $S^3$.
More precisely, if $\theta\sim\theta'$, then there is a permutation $\pi$ of $\cpa{1,2,3}$ such that
$k_i\sim k'_{\pi(i)}$ as unoriented knots for $i=1,2,3$.
Similarly, if $\theta\approx\theta'$, then there is a permutation $\pi$ of $\cpa{1,2,3}$ such that
$k_i\approx k'_{\pi(i)}$ as unoriented knots for $i=1,2,3$.
Paraphrasing, theta-curves with distinct constituent knots are distinct.
Constituent knots, however, do not form a complete set of invariants for theta-curves.
For example, Kinoshita's theta-curve in Figure~\ref{fig:kinoshita} has trivial constituent knots yet is nontrivial
(see Kinoshita \cite{kinoshita58,kinoshita72} and Ozawa and Taylor~\cite{otknotted}).
\begin{figure}[htbp!]
    \centerline{\includegraphics[scale=1.0]{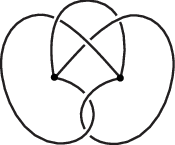}}
    \caption{Kinoshita's theta-curve.}
\label{fig:kinoshita}
\end{figure}
Since the time of Kinoshita's pioneering work in 1958, a rich collection of such theta-curves has been revealed
by Wolcott~\cite{wolcott} in 1985 and more recently in 2016 by Jang, Kronaeur, Luitel, Medici, Taylor, and Zupan~\cite{brunnian}.

Next, we recall three operations one may perform on knots and theta-curves---including the classical connected sum of knots.
In each of these operations, we delete the interiors of unknotted ball-prong pairs in separate copies of $S^3$
and glue together the resulting boundary $2$-spheres using an \emph{orientation reversing} homeomorphism.
These operations coincide with the intuitive notion of nicely splicing together two knots or graphs in a single copy
of $S^3$ and separated by an embedded $2$-sphere.
That splicing glues the two halves of $S^3$ by an orientation reversing homeomorphism of their boundaries.

A \textbf{$k$-prong} is obtained from $k$ disjoint arcs by gluing together one endpoint from each arc.
The point in a $k$-prong at which the endpoints are glued is the \textbf{prong point}.
A $2$-prong is an arc with a distinguished interior point.
A \textbf{ball-prong pair} $(B,P)$ is a copy $B$ of $D^3$ with a neatly embedded $k$-prong $P$ for some integer $k\geq2$.
The prong point of $P$ lies in the interior of $B$, and the $k$ boundary points of $P$ lie in the boundary of $B$.
A ball-prong pair $(B,P)$ is \textbf{unknotted} provided it is homeomorphic to $(D^3,R)$
where $R\subseteq D^2\subseteq D^3$ is a $k$-prong with prong point the origin and radial arcs.
In particular, an unknotted ball-arc pair is homeomorphic to $(D^3,D^1)$.
The following lemma is useful for identifying unknotted ball-prong pairs.

\begin{lemma}\label{lem:ubpp}
A ball-prong pair $(B,P)$ is unknotted if and only if there exists a neatly embedded $2$-disk
$\Delta$ in $B$ that contains $P$.
\end{lemma}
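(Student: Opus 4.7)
The plan is to prove both directions separately; the forward direction is almost immediate from unpacking the definition, while the reverse direction requires a two-stage standardization argument---first the disk, then the prong inside it.

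For the forward direction, assume $(B,P)$ is unknotted, so there is a homeomorphism $h\colon(D^3,R)\to(B,P)$. The equatorial disk $D^2\subseteq D^3$ is neatly embedded and contains $R$ by the definition of the radial $k$-prong. Hence $\Delta=h(D^2)$ is a neatly embedded $2$-disk in $B$ containing $P$, as desired.

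For the reverse direction, suppose a neatly embedded $2$-disk $\Delta\subseteq B$ contains $P$. The first step is to invoke the classical fact that every neatly embedded $2$-disk in a $3$-ball is standardly embedded: the boundary $\partial\Delta$ is an unknotted scc in $\partial B\cong S^2$, so Schoenflies (in $S^2$) and the analogous result for disks in $D^3$ furnish a homeomorphism $g\colon B\to D^3$ carrying $\Delta$ onto the equatorial $2$-disk $D^2\subseteq D^3$. After replacing $(B,P)$ by $(D^3,g(P))$, we may assume $\Delta=D^2$ and that $P\subseteq D^2$ is a $k$-prong with prong point in $\tn{Int}\, D^2$ and $k$ endpoints on $\partial D^2$.

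The second step is purely $2$-dimensional: any such $k$-prong in $D^2$ can be carried by a self-homeomorphism of $D^2$ to the standard radial $k$-prong $R$. Indeed, $P$ is a tree embedded in $D^2$ whose complement consists of $k$ open disks (one per cyclic ``sector''), and the same is true of $R$; matching the two configurations sector-by-sector by disk homeomorphisms yields the required $f\colon D^2\to D^2$ with $f(P)=R$. I would do this by first using an ambient isotopy of $D^2$ (supported near $\partial D^2$) to move the endpoints of $P$ to those of $R$ in the correct cyclic order, and then applying the Schoenflies theorem to each of the $k$ complementary sector disks to match them with the sectors of $R$.

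The final step is to extend $f\colon D^2\to D^2$ to a self-homeomorphism $F\colon D^3\to D^3$; then $F\circ g\colon(B,P)\to(D^3,R)$ exhibits $(B,P)$ as unknotted. For the extension, restrict $f$ to $\partial D^2=S^1$, extend over each of the two hemispheres of $\partial D^3$ by the Alexander coning trick (or alternatively arrange during the previous step that $f|_{\partial D^2}=\tn{id}$, in which case we extend by the identity on $\partial D^3$), and then cone once more from an interior basepoint to extend over $D^3$. The only genuine obstacle is the standardization of $\Delta$ in step one; the prong-matching and extension-by-coning steps are elementary once that is in hand.
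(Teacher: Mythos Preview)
Your proof is correct and follows essentially the same strategy as the paper: both arguments build the desired homeomorphism in stages using the $2$- and $3$-dimensional Schoenflies theorems together with the fact that self-homeomorphisms of $S^1$ and $S^2$ extend over $D^2$ and $D^3$. The only difference is organizational---the paper defines $h$ directly on $\partial\Delta$, then $P$, then $\Delta$, then $\partial B$, then $B$, whereas you first standardize the pair $(B,\Delta)$ to $(D^3,D^2)$ and afterwards straighten the prong inside $D^2$ and extend; the ingredients and the substance are the same.
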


\begin{proof}
The forward implication is clear.
For the reverse implication, one builds the desired homeomorphism $h$ in stages using the following tools:
(i) the $2$- and $3$-dimensional Schoenflies theorems~\cite[pp.~68~\&~117]{moise}, \cite[Thm.~1.1]{hatcher3d}, and \cite[Ch.~III]{cerf},
and (ii) the facts that every homeomorphism of $S^1$ and $S^2$ extend to $D^2$ and $D^3$
respectively \cite[pp.~44--45]{moise}, \cite{munkres,smale}, and \cite[Thm.~3.10.11]{thurston}.
Using those tools, begin by defining a homeomorphism $h:\partial \Delta \to \partial D^2$,
extend $h$ to $P\to R$ respecting the cyclic orders of $P\subseteq\Delta$ and $R\subseteq D^2$,
extend to $\Delta\to D^2$,
extend to $\partial B\to\partial D^3$,
and finally extend to $B\to D^3$.
\end{proof}

Let $K\subseteq S^3$ and $J\subseteq S^3$ be knots in separate copies of $S^3$.
A \textbf{connected sum} of $K$ and $J$ is a knot in $S^3$ defined as follows.
Let $(B_K,a_K)$ be an unknotted ball-arc pair where $a_K$ is an arc in $K$,
and similarly define $(B_J,a_J)$.
Delete the interiors of $B_K$ and $B_J$, and then glue together the resulting ball-arc pairs along their boundaries using an orientation reversing homeomorphism matching up the two points of $K$ and of $J$ therein.
A resulting knot in $S^3$ is denoted $K\# J$.
Without additional data---such as orientations on $K$ and $J$---the connected sum of $K$ and $J$ could mean two different knots.
A knot $K$ is \textbf{prime} provided if $K\approx J_1\# J_2$,
then $J_1\approx U$ or $J_2\approx U$.
We adopt the usual convention that the unknot $U$ is not prime.
While the connected sum of knots is commutative, associative, and unital, it does not have inverses.
Namely, if $K\# J \approx U$, then $K\approx U$ and $J\approx U$.
One standard proof of that fact uses additivity of the genus of a knot~\cite[p.~93]{bz}.

Let $\theta\subseteq S^3$ be a theta-curve and let $K\subseteq S^3$ be a knot in separate copies of $S^3$.
A \textbf{$2$-connected sum} of $\theta$ and $K$ is a theta-curve in $S^3$ defined as follows.
\begin{figure}[htbp!]
    \centerline{\includegraphics[scale=1.0]{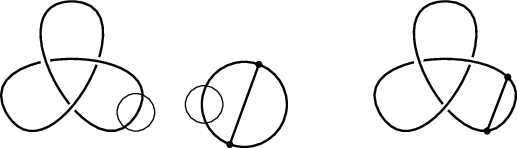}}
    \caption{A knot $K$ and a theta-curve $\theta$ with unknotted ball-arc pairs (left) and a $2$-connected sum $\theta\#_2 K$
							of $\theta$ and $K$ (right).}
\label{fig:sum2}
\end{figure}
Let $(B_{\theta},a_{\theta})$ be an unknotted ball-arc pair where $a_{\theta}$ is an arc in the interior of some edge $e_i$ of $\theta$,
and let $(B_K,a_K)$ be an unknotted ball-arc pair where $a_K$ is an arc in $K$.
Delete the interiors of $B_{\theta}$ and $B_K$, and then glue together the resulting balls along their boundaries using an orientation reversing homeomorphism matching up the two points of $\theta$ and of $K$ therein as in Figure~\ref{fig:sum2}.
A resulting theta-curve in $S^3$ is denoted $\theta\#_2 K$.
Without additional data, $\theta\#_2 K$ could mean up to six theta-curves.
Observe that $2$-connected sum does not have inverses in the following sense.
If $\theta\#_2 K\approx \theta_U$, then $\theta\approx \theta_U$ and $K\approx U$.
That's a consequence of the fact that knots don't have inverses under connected sum.

Let $\theta_1\subseteq S^3$ and $\theta_2\subseteq S^3$ be theta curves in separate copies of $S^3$.
A \textbf{$3$-connected sum} of $\theta_1$ and $\theta_2$ is a theta-curve in $S^3$ defined as follows.
\begin{figure}[htbp!]
    \centerline{\includegraphics[scale=1.0]{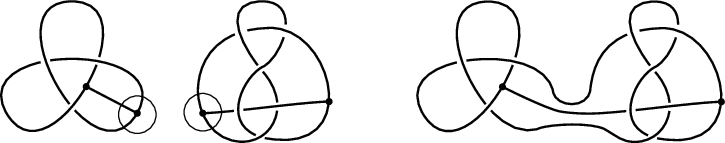}}
    \caption{Two theta-graphs $\theta_1$ and $\theta_2$ with unknotted ball-prong pairs (left) and a $3$-connected sum $\theta_1\#_3 \theta_2$
							of $\theta_1$ and $\theta_2$ (right).}
\label{fig:sum3}
\end{figure}
Let $(B_1,P_1)$ be an unknotted ball-prong pair where $P_1\subseteq \theta_1$ is a $3$-prong.
Similarly define $(B_2,P_2)$.
Delete the interiors of $B_1$ and $B_2$ and then glue together the resulting ball-prong pairs along their boundaries using an orientation reversing homeomorphism matching up the three points of $\theta_1$ and of $\theta_2$ therein as in Figure~\ref{fig:sum3}.
A resulting theta-curve in $S^3$ is denoted $\theta_1\#_3 \theta_2$.
Without additional data, $\theta_1\#_3 \theta_2$ could mean up to $24$ theta-curves.
Wolcott~\cite[$\S$4]{wolcott} proved that $\#_3$ is well-defined provided one specifies the vertices in $\theta_1$ and $\theta_2$
at which to sum and one specifies the matching of their edges.
Observe that $3$-connected sum does not have inverses in the following sense.

\begin{lemma}\label{lem:3sumnoinv}
If $\theta_1$ and $\theta_2$ are theta-curves and $\theta_1\#_3 \theta_2\approx \theta_U$,
then $\theta_1\approx \theta_U$ and $\theta_2\approx \theta_U$.
\end{lemma}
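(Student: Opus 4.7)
The plan is to apply Lemma~\ref{lem:ubpp}. Suppose $\theta\coloneqq\theta_1\#_3\theta_2\approx\theta_U$, and let $S\subseteq S^3$ be the sum 2-sphere, meeting $\theta$ in $\cpa{p_1,p_2,p_3}$ and separating $S^3$ into 3-balls $B_1^c,B_2^c$ with $\theta\cap B_i^c=Q_i$ a 3-prong whose boundary is $\cpa{p_1,p_2,p_3}$. We aim to show each $\pa{B_i^c,Q_i}$ is unknotted; since $\pa{B_i,P_i}$ is unknotted by the definition of $\#_3$, gluing the two pairs reconstructs $\theta_i$ inside a 2-sphere, giving $\theta_i\approx\theta_U$.

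By hypothesis there is a 2-sphere $\Sigma_0\subseteq S^3$ with $\theta\subseteq\Sigma_0$. Since $\theta$ is transverse to $S$ at each $p_i$ and $\theta\subseteq\Sigma_0$, $\Sigma_0$ is automatically transverse to $S$ at each $p_i$; an isotopy supported away from $\theta$ then makes $\Sigma_0\pitchfork S$ everywhere, so $\Sigma_0\cap S$ is a finite disjoint union of circles containing the three marked points. The key combinatorial observation is this: because $Q_i$ is connected, it lies in a single component disk $\Delta_i$ of $\Sigma_0\cap B_i^c$, and $\partial \Delta_i$ is a circle of $\Sigma_0\cap S$ that contains all three $p_j$ (the endpoints of $Q_i$). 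Since distinct components of $\Sigma_0\cap S$ are disjoint, this forces $\partial \Delta_1=\partial \Delta_2\eqqcolon c_0$ to be the unique such circle, and every other component of $\Sigma_0\cap S$ is disjoint from $\theta$.

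We then eliminate the extra circles by innermost-disk surgery. An innermost $c\ne c_0$ on $S$ bounds a disk $D\subseteq S$ with $D\cap\Sigma_0=c$ and $D\cap\theta=\emptyset$; since the circle $c$ is disjoint from the connected graph $\theta$, one of the two disks that $c$ bounds in $\Sigma_0$ is disjoint from $\theta$, and replacing it by $D$ yields a new 2-sphere containing $\theta$ with one fewer intersection circle with $S$. Iterating produces a 2-sphere $\Sigma$ with $\theta\subseteq\Sigma$ and $\Sigma\cap S=c_0$, so $S$ splits $\Sigma$ into the two disks $\Delta_1\subseteq B_1^c$ and $\Delta_2\subseteq B_2^c$ with $Q_i\subseteq\Delta_i$; Lemma~\ref{lem:ubpp} then gives each $\pa{B_i^c,Q_i}$ unknotted, completing the reduction. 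The main obstacle is the innermost-disk surgery step, requiring the correct choice of $\theta$-free subdisk of $\Sigma_0$ at each replacement; this is afforded by the connectedness of $\theta$ combined with the fact that every non-$c_0$ intersection circle is disjoint from $\theta$.
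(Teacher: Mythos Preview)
Your proof is correct and follows essentially the same innermost-circle argument as the paper: identify the unique intersection circle $c_0$ carrying all three points $p_i$, eliminate the remaining circles by innermost-disk surgery, and then invoke Lemma~\ref{lem:ubpp}. The only cosmetic difference is that you surger the containing sphere $\Sigma_0$ while the paper isotopes the splitting sphere; also note that before surgery the component $\Delta_i$ of $\Sigma_0\cap B_i^c$ need not be a disk, but your argument only uses that its boundary contains all three $p_j$, and the deduction that these lie on a \emph{single} circle $c_0$ tacitly uses that two complementary regions of $\Sigma_0$ share at most one boundary circle (the dual graph is a tree)---a point the paper also leaves implicit with its ``Evidently.''
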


Wolcott~\cite[Thm.~4.2]{wolcott} proved Lemma~\ref{lem:3sumnoinv} using \textbf{locally unknotted}
theta-curves---meaning theta-curves in which all constituent knots are trivial---and work of Lickorish~\cite{lickorish} on prime tangles.
Motohashi~\cite[Lem.~2.1]{motohashi98} proved a generalization of Lemma~\ref{lem:3sumnoinv}.
For the convenience of the reader, we include a straightforward proof using a classical innermost circle argument.

\begin{proof}[Proof of Lemma~\ref{lem:3sumnoinv}]
By hypothesis, $\theta_U$ lies in an embedded sphere $S^2\subseteq S^3$.
As $\theta_1\#_3 \theta_2\approx \theta_U$, there is a splitting sphere $\Sigma\approx S^2$ that is transverse to $S^2$ and $\theta_U$,
and meets each edge $e_i$ at exactly one point $p_i\in\Int{e_i}$.
The spheres $\Sigma$ and $S^2$ meet in a finite disjoint union of sccs.
Evidently, one of those sccs, $C$, must meet every edge of $\theta$.
We can eliminate any other scc in $\Sigma\cap S^2$ without disturbing $\theta$ or $C$ as follows.
Let $C'$ be one of those other sccs that is innermost in $\Sigma-C$.
So, $C'=\partial \Delta$ where $\Delta$ is a $2$-disk in $\Sigma-C$ and $\Delta\cap S^2=C'$.
The scc $C'$ separates $S^2$ into two $2$-disks $D_1$ and $D_2$.
By connectedness, $\theta_U$ lies in the interior of $D_1$ or $D_2$.
Without loss of generality, $\theta_U\subseteq\Int{D_2}$.
So, the $2$-sphere $\Delta \cup D_1$ is disjoint from $\theta_U$ and separates $S^3$ into two $3$-balls.
By connectedness, one of those $3$-balls, $B$, is disjoint from $\theta_U$.
Use $B$ to push $\Delta$ slightly past $D_1$ to a parallel copy of $D_1$.
That eliminates at least $C'$ from $\Sigma\cap S^2$.
Repeat that operation finitely many times so that $\Sigma\cap S^2=C$.
Now, $C$ separates $S^2$ into two $2$-disks containing $3$-prongs of $\theta_U$.
By Lemma~\ref{lem:ubpp}, $\theta_1\approx \theta_U$ and $\theta_2\approx \theta_U$.
\end{proof}

In Section~\ref{sec:cttcs}, we classify torus theta-curves.
In Remarks~\ref{rem:class}, we use Lemma~\ref{lem:3sumnoinv} and the following lemma to observe
that a torus theta-curve is never a $3$-connected sum of nontrivial theta-curves.

\begin{lemma}\label{lem:2sumUPrimeKnot3sum}
If $\theta_U\#_2 K\approx \theta_1\#_3\theta_2$ where $K$ is a prime knot and $\theta_1$ and $\theta_2$ are theta-curves,
then $\theta_1\approx\theta_U$ or $\theta_2\approx\theta_U$.
\end{lemma}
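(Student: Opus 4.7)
The plan is to adapt the innermost-circle argument from the proof of Lemma~\ref{lem:3sumnoinv}. Work in one copy of $S^3$ and set $\theta := \theta_U\#_2 K \approx \theta_1\#_3\theta_2$. By construction of $\#_2$, there is a $2$-sphere $S$ meeting $\theta$ transversely at two points $p,q$ in the interior of one edge $e_1$, separating $S^3$ into balls $B_1, B_2$, where $\theta\cap B_1$ lies in a neatly embedded disk $\Delta\subseteq B_1$ (coming from the defining sphere of $\theta_U$), and $(B_2, \alpha) := (B_2, \theta\cap B_2)$ is a ball-arc pair whose associated knot is the prime knot $K$. The vertices $v_1,v_2$ and the edges $e_2,e_3$ lie in $\Int B_1\cap\Delta$. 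The $\#_3$-decomposition provides a splitting sphere $\Sigma$ meeting $\theta$ transversely at three points, one per edge, with ball-$3$-prong pairs on either side realizing $\theta_1$ and $\theta_2$.

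Arrange $\Sigma\pitchfork S$, so $\Sigma\cap S$ is a finite disjoint union of simple closed curves in the annulus $S-\cpa{p,q}$, and isotope $\Sigma$ through valid splitting spheres so that $n := \card{\Sigma\cap S}$ is minimal. The goal is $n = 0$. Granted this, $\Sigma$ must lie in $\Int B_1$ since it meets $e_2,e_3\subseteq\Int B_1$. Further isotopies in $\Int B_1$ can then put $\Sigma$ in standard position relative to $\Delta$ so that the ball bounded by $\Sigma$ inside $B_1$ contains a $3$-prong of $\theta$ lying in a neatly embedded disk of that ball; Lemma~\ref{lem:ubpp} then identifies the ball-$3$-prong pair as unknotted, giving one of $\theta_1,\theta_2$ isotopic to $\theta_U$.

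To force $n = 0$, apply the innermost-disk argument. Let $C\subseteq\Sigma\cap S$ be innermost in $\Sigma$, bounding a disk $D\subseteq\Sigma$ with $\Int D\cap S = \emptyset$, so $\Int D$ lies in $\Int B_1$ or $\Int B_2$. If $D\cap\theta = \emptyset$, a companion disk $E\subseteq S$ bounded by $C$ together with $D$ encloses a ball in $\bar B_1$ or $\bar B_2$; connectedness of $\theta$ and $\theta\not\subseteq\bar B_j$ force the ball to be disjoint from $\theta$, and an isotopy of $\Sigma$ through it reduces $n$. If $\Int D\subseteq\Int B_2$ and $D\cap\theta\neq\emptyset$, the intersection is a single point on $\alpha$, and a parity count using $e_2\cap S=e_3\cap S=\emptyset$ shows the $2$-sphere $D\cup E$ (for $E$ a suitable disk of $S$) must meet the graph inconsistently with $e_2,e_3$ connecting $v_1$ to $v_2$. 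If $\Int D\subseteq\Int B_1$ and $D\cap\theta\neq\emptyset$, a disk-swap $\Sigma\mapsto\pa{\Sigma-\Int D}\cup E_\e$, where $E\subseteq S$ is an appropriate disk bounded by $C$ and $E_\e$ is its push-off into $B_2$, produces a new splitting sphere with strictly fewer $S$-intersections.

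The main obstacle is verifying the final disk-swap preserves the splitting property when $D$ meets multiple edges, since the naive swap may drop intersections with $e_2$ or $e_3$. Here the primeness of $K$ is essential: it prevents essential meridional disks for $\alpha$ in $B_2$, forces the parity constraints from the three edges to be jointly consistent, and allows either a valid swap or an appeal to Motohashi's generalization of Lemma~\ref{lem:3sumnoinv} from~\cite[Lem.~2.1]{motohashi98} applied to the tangle $(B_2,\alpha)$.
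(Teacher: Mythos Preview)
Your overall plan---reduce to $\Sigma\cap S=\emptyset$ by innermost arguments, then invoke Lemma~\ref{lem:3sumnoinv}---matches the paper's, but two of your three innermost cases are not handled correctly.

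First, the case $\Int D\subseteq\Int B_2$ with $D\cap\theta\ne\emptyset$ does \emph{not} lead to a contradiction. Your parity count only shows that $C$ separates $p$ and $q$ on $S$, i.e.\ that $D$ is a meridional disk for $\alpha$ in $B_2$; such disks certainly exist. The assertion that primeness ``prevents essential meridional disks for $\alpha$ in $B_2$'' is false. What primeness actually gives you is Lemma~\ref{pkb}: the disk $D$ cuts $(B_2,\alpha)$ into two ball-arc pairs, exactly one of which is unknotted, and you isotope $\Sigma$ across that unknotted piece to remove $C$. Without this lemma (or an equivalent), the case is simply open.

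Second, in the case $\Int D\subseteq\Int B_1$ with $D\cap\theta\ne\emptyset$, your proposed disk-swap $D\mapsto E_\varepsilon$ can destroy the splitting property exactly as you note, and your final paragraph does not repair this: it gestures at primeness and at Motohashi's lemma without saying what statement is being applied or why its hypotheses hold. The paper avoids this difficulty by introducing the auxiliary disk $\Delta$ with $\partial\Delta=e'_1\cup e'_2$ (in your labeling, $e_2\cup e_3$) disjoint from $S$, first eliminating all curves of $\Sigma\cap S$ and $\Sigma\cap\Delta$ that are inessential on $\Sigma$ relative to the arc $d=\Sigma\cap\Delta$, and then treating the remaining curves---which are now all meridional on $S$---by taking the one innermost with respect to the single point $p_3:=\Sigma\cap e_1$ and applying Lemma~\ref{pkb} or Lemma~\ref{lem:chopubap} depending on which side of $S$ the innermost disk lies. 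The disk $\Delta$ is what lets you control the $B_1$ side; your argument has no substitute for it.
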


Turaev~\cite[Lemma~5.1]{turaev} proved Lemma~\ref{lem:2sumUPrimeKnot3sum} in his seminal work on knotoids.
We include a proof for the convenience of the reader.
Our proof uses the following two lemmas.

\begin{lemma}\label{lem:chopubap}
Let $(B,a)$ be a ball-arc pair and let $\Delta\subseteq B$ be a neatly embedded $2$-disk
that meets $a$ in general position at exactly one point $p\in\Int{a}$.
Then, $\Delta$ divides $(B,a)$ into two ball-arc pairs $(B_1,a_1)$ and $(B_2,a_2)$.
Furthermore, $(B,a)$ is unknotted if and only if $(B_1,a_1)$ and $(B_2,a_2)$ are both unknotted.
\end{lemma}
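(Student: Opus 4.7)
The plan is to use Lemma~\ref{lem:ubpp} to reinterpret ``unknotted'' as ``contains a neat $2$-disk containing the arc'' and then transfer such a disk across $\Delta$ in each direction. For the first assertion, the $3$-dimensional Schoenflies theorem applied to the two sides of $\Delta$ in $B$ produces two $3$-balls $B_1$ and $B_2$ with $B=B_1\cup_\Delta B_2$; since $a$ meets $\Delta$ transversely only at $p\in\Int a$, each $a_i:=a\cap B_i$ is a sub-arc of $a$ with one endpoint on $\partial B$ and the other at $p\in\Int\Delta\subseteq\partial B_i$, making each $(B_i,a_i)$ a ball-arc pair.

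For the reverse implication, Lemma~\ref{lem:ubpp} supplies neat $2$-disks $\Delta_i\subseteq B_i$ with $a_i\subseteq\Delta_i$. Using the unknottedness to realize each $(B_i,a_i)$ as $(D^3,D^1)$, I would choose the $\Delta_i$ so that $\partial\Delta_i\cap\Delta$ is a single arc through $p$; then, since any two arcs in $\Delta$ through $p$ with endpoints on $\partial\Delta$ are ambient isotopic in $\Delta$, I would further arrange that $\Delta_1\cap\Delta=\Delta_2\cap\Delta=\alpha$ for a common such arc $\alpha$. Then $\Delta_1\cup_\alpha\Delta_2$ is a neat $2$-disk in $B$ containing $a$, and Lemma~\ref{lem:ubpp} yields that $(B,a)$ is unknotted.

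For the forward implication, Lemma~\ref{lem:ubpp} provides a neat $2$-disk $D\subseteq B$ with $a\subseteq D$. After a small perturbation keeping $a$ fixed, $D$ is transverse to $\Delta$, so $D\cap\Delta$ is a compact $1$-manifold containing $p$. The plan is to apply standard innermost-scc and outermost-arc arguments in $\Delta$ to reduce $D\cap\Delta$ to a single arc $\alpha$ through $p$ with endpoints on $\partial D\cap\partial\Delta$. Then $\alpha$ divides $D$ into two neat $2$-disks $D_i\subseteq B_i$ with $a_i\subseteq D_i$, and Lemma~\ref{lem:ubpp} completes the proof.

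The main obstacle is carrying out this minimization while preserving $a\subseteq D$. Two observations keep things tractable. First, because $a\cap\Delta=\{p\}$, any component of $D\cap\Delta$ avoiding $p$ is disjoint from $a$; the subdisk of $D$ used in the surgery is then automatically disjoint from $a$, so the move preserves $a\subseteq D$. Second, transversality rules out the component through $p$ being a simple closed curve $\gamma$: such a $\gamma$ would bound a subdisk $d\subseteq D$ with $\partial d\cap\partial D=\emptyset$, forcing $a$ to cross $\gamma$ transversely at $p$ and yet have its endpoints outside $d$, which is impossible given $a\cap\gamma=\{p\}$. Hence the component through $p$ is automatically an arc, completing the reduction.
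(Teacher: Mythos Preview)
Your argument is correct and follows essentially the same approach as the paper: both directions hinge on Lemma~\ref{lem:ubpp} together with Schoenflies and innermost-type arguments to simplify the intersection of the two disks. The only notable difference is in the forward implication, where the paper normalizes $(B,a)=(D^3,D^1)$ and isotopes $\Delta$ (rather than $D$), first arranging $\partial\Delta$ to be a great circle on $\partial D^3$ so that $\Delta\cap D^2$ automatically has exactly one arc---this shortcut lets the paper bypass the outermost-arc step you need.
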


\begin{proof}[Proof of Lemma~\ref{lem:chopubap}]
The $3$-dimensional Schoenflies theorem implies that $\Delta$ divides $(B,a)$ into two ball-arc pairs $(B_1,a_1)$ and $(B_2,a_2)$
where $B_1\cap B_2=\Delta$, $B_1\cup B_2 = B$, $a_1\cap a_2 =p$, and $a_1\cup a_2=a$.

For the forward implication, we are given that $(B,a)$ is unknotted.
Without loss of generality, we may assume $(B,a)=\pa{D^3,D^1}$.
In particular, $p=\Delta\cap D^1\in\Int{D^1}$.
We have standard disks $D^1\subseteq D^2\subseteq D^3$, and $\Delta$ is in general position with respect to $D^1$ and $D^2$.
We will isotop $\Delta$ in $D^3$ fixing $D^1$ pointwise and sending $\partial\Delta$ into $\partial D^3$ at all times.
By an isotopy with support near $\partial D^3$, we arrange for $\partial\Delta$ to be a great circle on $\partial D^3$.
So, $\partial\Delta$ meets $D^2$ at exactly two points.
Hence, $\Delta \cap D^2$ is the disjoint union of a single arc $b$ and finitely many sccs.
The arrangement of the endpoints of $b$ and $D^1$ on $\partial D^2$ imply that $b$ meets $D^1$.
As $\Delta$ meets $D^1$ at exactly $p$, we see that $p$ lies on $b$.
An innermost circle argument allows us to further isotop $\Delta$ so that $\Delta\cap D^2=b$.
Now, $b$ divides $D^2$ into two $2$-disks, one containing $a_1$ and the other containing $a_2$.
By Lemma~\ref{lem:ubpp}, $(B_1,a_1)$ and $(B_2,a_2)$ are unknotted, as desired.

For the reverse implication, we are given that $(B_1,a_1)$ and $(B_2,a_2)$ are unknotted.
As $(B_1,a_1)$ is unknotted, there is a neatly embedded $2$-disk $D_1\subseteq B_1$ containing $a_1$.
Adjust $D_1$ in $B_1$ by an isotopy fixing $a_1$, sending $\partial D_1$ into $\partial B_1$ at all times, and with support near $\partial B_1$
so that $D_1$ meets $\Delta$ in exactly an arc $b$ neatly embedded in $\Delta$.
Similarly, adjust $D_2$ in $B_2$.
Now, $D_1\cup D_2$ is a neatly embedded $2$-disk in $B$ containing $a$.
By Lemma~\ref{lem:ubpp}, $(B,a)$ is unknotted, as desired.
\end{proof}

\begin{lemma}\label{pkb}
Let $K\subseteq S^3$ be a prime knot.
Let $(B,b)$ be an unknotted ball-arc pair such that
$B\subseteq S^3$,
$b\subseteq K$,
and $\partial B$ meets $K$ in general position.
Let $(B',b')$ be the ball-arc pair \emph{complementary} to $(B,b)$ in $S^3$, meaning $B'=S^3-\Int{B}$ and $b'=K-\Int{b}$.
Let $\Delta\subseteq B'$ be a neatly embedded $2$-disk that meets $b'$ in general position at exactly one point $p\in\Int{b'}$.
Then, $\Delta$ divides $(B',b')$ into two ball-arc pairs, exactly one of which is unknotted.
\end{lemma}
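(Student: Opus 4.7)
The plan is to realize $K$ as a connected sum along the $2$-sphere $\partial B_1$ and then invoke primeness. By the $3$-dimensional Schoenflies theorem, $\Delta$ splits $(B',b')$ into two ball-arc pairs $(B_1,b_1)$ and $(B_2,b_2)$ with $B_1 \cap B_2 = \Delta$ and $b_1 \cap b_2 = \{p\}$. Writing $\partial b = \{q_1, q_2\}$ and letting $\Delta_1, \Delta_2$ denote the two disks into which $\partial \Delta$ separates $\partial B'$, a short check forces the labeling so that $\partial B_i = \Delta \cup \Delta_i$ and the endpoint $q_i$ of $b_i$ on $\partial B'$ lies in $\Delta_i$. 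Then $B \cup B_2 = S^3 - \Int{B_1}$ is again a $3$-ball, and the sphere $\partial B_1 = \Delta \cup \Delta_1$ meets $K$ transversely in exactly $\{p, q_1\}$, splitting $(S^3, K)$ into the ball-arc pairs $(B_1, b_1)$ and $(B \cup B_2, b \cup b_2)$.

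Capping each side with the trivial ball-arc pair $(D^3, D^1)$ produces knots $J_1, J_2 \subseteq S^3$, and reassembling both caps simultaneously along $\partial B_1$ recovers $(S^3, K)$; thus $K \approx J_1 \# J_2$. Primeness of $K$ forces $J_1 \approx U$ or $J_2 \approx U$. The crux is the equivalence: $J_i \approx U$ if and only if the corresponding ball-arc pair is unknotted. The forward implication is immediate. For the reverse, I would take a spanning disk for $J_i$ in $S^3$ and apply an innermost scc argument to its intersection with $\partial B_1$, in the style of the proof of Lemma~\ref{lem:chopubap}, to reduce that intersection to a single arc; the resulting half-disk in the relevant $3$-ball then gives, after a slight pushoff, a neatly embedded disk containing the relevant arc, and Lemma~\ref{lem:ubpp} applies. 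So at least one of $(B_1, b_1)$ and $(B \cup B_2, b \cup b_2)$ is unknotted.

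In the case that $(B \cup B_2, b \cup b_2)$ is unknotted, I would invoke Lemma~\ref{lem:chopubap} using the separating disk $\Delta_2 \subseteq \partial B$. This disk is neatly embedded in $B \cup B_2$ with $\partial \Delta_2 = \partial \Delta \subseteq \partial(B \cup B_2) = \partial B_1$, and it meets $b \cup b_2$ transversely at the single interior point $q_2$, splitting the pair into $(B, b)$ and $(B_2, b_2)$. Since $(B, b)$ is unknotted by hypothesis, the lemma yields $(B_2, b_2)$ unknotted. Hence in either case at least one of $(B_1, b_1)$ and $(B_2, b_2)$ is unknotted.

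For the uniqueness assertion I would argue by contradiction: if both $(B_i, b_i)$ were unknotted, Lemma~\ref{lem:chopubap} applied to $(B', b')$ via the splitting disk $\Delta$ would yield $(B', b')$ unknotted; then gluing the two unknotted pairs $(B, b)$ and $(B', b')$ along $\partial B$ would identify $(S^3, K)$ with $(S^3, U)$, contradicting primeness (the unknot is excluded from the primes by convention). The main obstacle is the equivalence $J_i \approx U$ iff the ball-arc pair is unknotted, whose reverse direction depends on the innermost scc argument sketched above; everything else is combinatorial bookkeeping among $B$, $B_1$, $B_2$ and their bounding spheres.
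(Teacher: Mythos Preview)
Your argument is correct, but it is considerably more elaborate than the paper's. The paper dispatches the lemma in four lines by trichotomy: if both $(B_1,b_1)$ and $(B_2,b_2)$ are knotted, then $K$ is visibly a nontrivial connected sum, contradicting primeness; if both are unknotted, then the reverse implication of Lemma~\ref{lem:chopubap} makes $(B',b')$ unknotted, whence $K$ is the unknot, again contradicting primeness. Your route---explicitly capping off to build $J_1,J_2$, establishing the equivalence ``$J_i\approx U$ iff the ball-arc pair is unknotted'' by an innermost-circle argument, and then treating the case $(B\cup B_2,b\cup b_2)$ unknotted separately---reproves content already packaged in Lemma~\ref{lem:chopubap}. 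In particular, the innermost-circle argument you sketch for the equivalence is precisely the forward implication of Lemma~\ref{lem:chopubap} applied after capping, so you need not redo it. Two small writing points: in your stated biconditional the direction requiring the spanning-disk argument is the \emph{forward} one, not the reverse; and in your application of Lemma~\ref{lem:chopubap} to $(B\cup B_2,b\cup b_2)$ you invoke the hypothesis that $(B,b)$ is unknotted, but the forward implication of that lemma already gives both halves unknotted without it.
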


\begin{proof}[Proof of Lemma~\ref{pkb}]
By Lemma~\ref{lem:chopubap},
$\Delta$ divides $(B',b')$ into two ball-arc pairs $(B_1,b_1)$ and $(B_2,b_2)$.
If $(B_1,b_1)$ and $(B_2,b_2)$ are both knotted, then $K$ is not prime, a contradiction.
If $(B_1,b_1)$ and $(B_2,b_2)$ are both unknotted, then the reverse implication in Lemma~\ref{lem:chopubap} implies that $(B',b')$ is unknotted.
That implies $K$ is unknotted, a contradiction.
\end{proof}

\begin{proof}[Proof of Lemma~\ref{lem:2sumUPrimeKnot3sum}]
Begin with $\theta_U$ as in Figure~\ref{fig:K2sumtU} (left).
Let $\theta'=\theta_U\#_2 K$ where, without loss of generality, the $2$-connected sum is performed along $e_3$ as in Figure~\ref{fig:K2sumtU} (middle).
\begin{figure}[htbp!]
    \centerline{\includegraphics[scale=1.0]{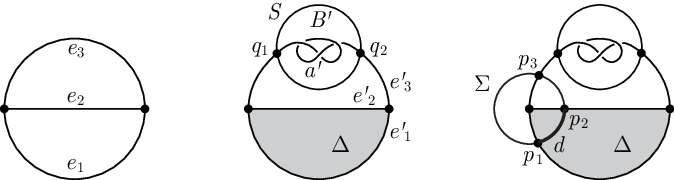}}
    \caption{Unknotted theta-graph $\theta_U$ (left), $2$-connected sum $\theta'=\theta_U\#_2 K$ where $K$ is a prime knot (middle),
		and goal in Lemma~\ref{lem:2sumUPrimeKnot3sum}.}
\label{fig:K2sumtU}
\end{figure}
That means an unknotted ball-arc pair $(B,a)$ along $e_3$ with boundary $2$-sphere $S$ is replaced with a knotted ball-arc pair
$(B',a')$ obtained from $(S^3,K)$ by removing the interior of an unknotted ball-arc pair.
The edges of $\theta'$ are $e'_1=e_1$, $e'_2=e_2$, and $e'_3$ equal to $e_3$ with $K$ tied in it.
Also depicted in Figure~\ref{fig:K2sumtU} (middle) is a $2$-disk $\Delta$ with boundary $e'_1\cup e'_2$ where $\Delta$ is disjoint from $\Int{e'_3}$ and $S$.

By hypothesis, $\theta'\approx \theta_1 \#_3 \theta_2$.
So, there is a splitting $2$-sphere $\Sigma\subseteq S^3$ that meets each arc $e'_i$
in general position at exactly one point $p_i\in\Int{e'_i}$ for $i=1,2,3$.
Notice that if $\Sigma$ and $S$ are disjoint, then the desired result holds.
To see that, replace $(B',a')$ with $(B,a)$.
Lemma~\ref{lem:3sumnoinv} implies that $\Sigma$ splits $(S^3,\theta_U)$ into two unknotted ball-prong pairs.
One of those pairs is disjoint from $B$ and is the desired unknotted ball-prong pair.

So, it remains to show that we can isotop $\Sigma$ fixing $\theta'$ setwise so that $\Sigma$ and $S$ are disjoint.
Note that $\Sigma$ meets $\Delta$ in exactly one arc $d$ and finitely many sccs $D_1,\ldots,D_m$, and
$\Sigma$ meets $S$ in finitely many sccs $C_1,\ldots,C_m$.
As $\Delta$ and $S$ are disjoint, each $C_i$ is disjoint from $d$ and from each $D_j$.
Each $C_i$ and $D_j$ has two possibilities on $\Sigma$:
\begin{enumerate}[label=(\arabic*)]
\item It bounds a $2$-disk in $\Sigma$ not containing $d$ or $p_3$.
\item It splits $\Sigma$ into two $2$-disks, one containing $d$ and one containing $p_3$.
\end{enumerate}
And, each $C_i$ has two possibilities on $S$:
\begin{enumerate}[label=(\arabic*)]
\item It bounds a $2$-disk in $S$ not containing $q_1$ or $q_2$.
\item It splits $S$ into two $2$-disks, one containing $q_1$ and one containing $q_2$.
\end{enumerate}
The following \emph{observation} will be used repeatedly: a knot in $S^3$ cannot meet a $2$-sphere in general position exactly once.
Consider an innermost scc of type (1) on $\Sigma$.
If it's a $D_j$, then an isotopy of $\Sigma$ removes it.
If it's a $C_i$, then it must also be of type (1) on $S$---by the observation applied to $e'_1\cup e'_3$---and an isotopy of $\Sigma$ removes it.
Repeat that operation until all sccs of type (1) on $\Sigma$ are removed.
By the observation applied to $e'_1\cup e'_3$, no $D_j$ remains and no $C_i$ of type (1) on $S$ remains.

Now, let $C_i$ be innermost on $\Sigma$ with respect to $p_3$---meaning $C_i$ bounds a $2$-disk $\Delta_i$ on $\Sigma$
such that $p_3\in\Int{\Delta_i}$ and $\Delta_i$ is disjoint from any other remaining $C_j$.
If $\Delta_i$ is contained in $B'$, then Lemma~\ref{pkb} implies that $\Delta_i$ splits $(B',a')$ into two ball-arc pairs, exactly one of which is unknotted.
Use that unknotted ball-arc pair to remove $C_i$.
Otherwise, $\Delta_i\subseteq S^3-\Int{B'}$.
Let $B''=S^3-\Int{B'}$ and $b''=B''\cap (e'_1\cup e'_3)$.
So, $(B'',a'')$ is an unknotted ball-arc pair and Lemma~\ref{lem:chopubap} implies that $\Delta_i$ splits $(B'',a'')$ into two unknotted ball-arc pairs.
One of those two ball-arc pairs is disjoint from $\Delta$.
Use that unknotted ball-arc pair to remove $C_i$.
\end{proof}

A theta-curve $\theta\subseteq S^3$ is \textbf{prime} provided:
(i) $\theta$ is nontrivial,
(ii) $\theta$ is not homeomorphic to a $2$-connected sum of a possibly trivial theta-curve and a nontrivial knot, and
(iii) $\theta$ is not homeomorphic to a $3$-connected sum of nontrivial theta-curves.
In other words, (ii) says that if $\theta\approx\theta'\#_2 K$ for a theta-curve $\theta'$ and a knot $K$, then $K$ is trivial.
And, (iii) says that if $\theta\approx\theta_1\#_3\theta_2$ for theta-curves $\theta_1$ and $\theta_2$, then 
$\theta_1$ or $\theta_2$ is trivial.

\begin{remark}
Some authors use an alternative definition of prime theta-curve, omitting requirement (ii) above.
For example, see Motohashi~\cite[p.~204]{motohashi98},
Matveev and Turaev~\cite{matveevturaev}, and
Turaev~\cite[p.~205]{turaev}.
\end{remark}

Let $T\subseteq S^3=\R^3\cup\cpa{\infty}$ be a standard, unknotted torus with oriented longitude $l$ and meridian $m$ as in Figure~\ref{fig:torus}.
\begin{figure}[htbp!]
    \centerline{\includegraphics[scale=1.0]{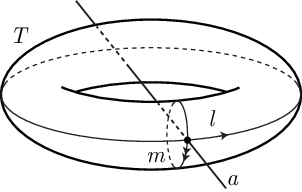}}
    \caption{Standard torus $T\subseteq S^3=\R^3\cup\cpa{\infty}$ with longitude $l$, meridian $m$,
		and the axis $a$ of the rotation $\rho$ by $\pi$ radians.}
\label{fig:torus}
\end{figure}
In particular, $T$ divides $S^3$ into two solid tori $T_1\approx S^1\times D^2$ and $T_2\approx S^1\times D^2$
such that $T_1 \cup T_2 = S^3$ and $T_1 \cap T_2 = T$.
Given relatively prime integers $p$ and $q$, let $t(p,q)\subseteq T$ denote the oriented \textbf{torus knot}
that wraps $p$ times around $T$ in the longitudinal direction and $q$ times around $T$ in the meridinal direction.
The scc $e_1-e_2$ in Figure~\ref{fig:torustc} is the torus knot $t(3,5)$.
Figure~\ref{fig:toruslm} depicts $T$ as a square with the usual identifications (left) and $t(3,5)=e_1-e_2$ in $T$ (right).
\begin{figure}[htbp!]
    \centerline{\includegraphics[scale=1.0]{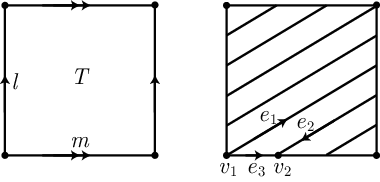}}
    \caption{Standard torus $T$ with longitude $l$ and meridian $m$ (left) and
		torus knot $t(3,5)=e_1-e_2$ and torus theta-curve $\theta(3,5)$ in $T$ (right).}
\label{fig:toruslm}
\end{figure}

Recall three involutions of $S^3=\R^3\cup\cpa{\infty}$ that are fundamental
for the classification of torus knots and for our purposes.
Each restricts to an involution of the torus $T$.
The first two are described on $\R^3$ and fix the point at infinity.
\begin{itemize}
\item $\rho:S^3\to S^3$ is the rotation of $\R^3$ by $\pi$ radians about the axis $a$ in Figure~\ref{fig:torus}.
\item $R:S^3\to S^3$ is the reflection of $\R^3$ across the horizontal plane containing the longitude $l$ in Figure~\ref{fig:torus}.
\item $\sigma:S^3\to S^3$ is the rotation of $S^3$ that swaps the core circles of $T_1$ and $T_2$,
swaps $T_1$ and $T_2$, and swaps $l$ and $m$.
\end{itemize}
The rotations $\rho$ and $\sigma$ of $S^3$ are isotopic to the identity, but the reflection $R$ is not.
Note that $\rho$ and $R$ commute, $\rho$ and $\sigma$ commute, but $R$ and $\sigma$ do not commute.

Recall some facts about torus knots and their classification.
Consider the oriented torus knot $t(p,q)$ where $p$ and $q$ are relatively prime integers.
We have:
\begin{equation}\label{symmtk}
\begin{aligned}
    -t(p,q)&=t(-p,-q) \quad &&\\
    t(p,q)&\sim -t(p,q) && \tn{by the rotation $\rho$ of $S^3$}\\
    t(p,q)&\sim t(q,p) && \tn{by the rotation $\sigma$ of $S^3$} \\
    t(p,q)&\approx t(p,-q) && \tn{by the reflection $R$ of $S^3$}
\end{aligned}
\end{equation}
The rotation $\rho$ of $S^3$ shows that torus knots are invertible, meaning they are ambient isotopic to their reverse.
Evidently, the classifications---up to ambient isotopy of $S^3$ and
separately up to homeomorphism of $S^3$ allowing reflections---of invertible knots regarded as oriented or unoriented knots coincide.
Torus knots are classified---see Burde-Zieschang~\cite[Ch.~3E]{bz} or Murasugi~\cite[Ch.~7]{mur}.

\begin{theorem}[Classification of torus knots]
Let $t(p,q)$ be a torus knot where $p$ and $q$ are relatively prime integers.
If $p$ or $q$ equals $0$ or $\pm1$, then $t(p,q)$ is the unknot.
Otherwise, $\card{p},\card{q}\geq 2$ and $t(p,q)$ is a prime knot.
Consider another such torus knot $t(p',q')$.
Then, $t(p',q')\sim t(p,q)$---as oriented knots---if and only if $\cpa{p',q'}$ equals
$\cpa{p,q}$ or $\cpa{-p,-q}$.
And, $t(p',q')\approx t(p,q)$---as oriented knots---if and only if $\cpa{p',q'}$ equals
$\cpa{p,q}$, $\cpa{-p,-q}$, $\cpa{p,-q}$, or $\cpa{-p,q}$.
The classifications of unoriented torus knots are exactly the same since torus knots are invertible.
\end{theorem}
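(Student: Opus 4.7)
The plan is to reduce the classification to invariants of the knot group $G_{p,q}:=\pi_1(S^3-t(p,q))$, then appeal to the explicit symmetries recorded in (\ref{symmtk}) for the realization of each equivalence.

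First I would dispatch the trivial cases. Coprimality forces that if one of $p, q$ is $0$ then the other is $\pm1$; such a $t(p,q)$ is a meridian or longitude of $T$, which bounds a disk in one of the two solid tori $T_1, T_2$ and is therefore unknotted. If $\card{p}=1$ or $\card{q}=1$, then $t(p,q)$ can be isotoped in $T_2$ (respectively $T_1$) to the core circle, and so is again unknotted. This takes care of the first sentence of the theorem.

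For the substantive case $\card{p},\card{q}\ge 2$, I would compute $G_{p,q}$ by a Seifert-van Kampen argument applied to the decomposition $S^3-t(p,q)=(T_1-t(p,q))\cup(T_2-t(p,q))$ glued along the annulus $T-t(p,q)$. The standard outcome is $G_{p,q}\cong \fg{a,b\mid a^p=b^q}$. Its center $Z(G_{p,q})$ is the infinite cyclic subgroup generated by $a^p=b^q$; the quotient $G_{p,q}/Z(G_{p,q})$ is the free product $\Z/\card{p}*\Z/\card{q}$, whose abelianization $\Z/\card{p}\oplus\Z/\card{q}$ recovers the unordered pair $\cpa{\card{p},\card{q}}$ as an invariant of $t(p,q)$. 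Primeness then follows because $G_{p,q}$ has nontrivial center while every composite knot group is centerless; alternatively one can note that the complement is a Seifert fibered space whose only essential tori are boundary-parallel, so $t(p,q)$ admits no essential swallow-follow torus.

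Once $\cpa{\card{p},\card{q}}$ is known to be an invariant, the forward direction of both classifications reduces to sorting out signs. The rotations $\rho,\sigma$ and the reflection $R$ from (\ref{symmtk}) realize each identification claimed in the statement; the content of the converse is that no further identifications occur. The main obstacle is the mirror problem: proving $t(p,q)\not\sim t(p,-q)$ when $\card{p},\card{q}\ge 2$. The knot group cannot see this, since $G_{p,q}\cong G_{p,-q}$ via $a\mapsto a, b\mapsto b^{-1}$, so one must import a genuinely orientation-sensitive invariant—classically the signature, or equivalently the fact that the Alexander module distinguishes a torus knot from its mirror via the sign of $pq$. Rather than redevelop this, I would cite Burde-Zieschang~\cite{bz} or Murasugi~\cite{mur}, whose treatments also handle the $\approx$-classification by showing that once reflections are allowed the sign of $pq$ ceases to be an invariant, leaving only $\cpa{\card{p},\card{q}}$ and no further identifications.
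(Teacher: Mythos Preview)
The paper does not actually prove this theorem: it is stated as background material and simply attributed to Burde--Zieschang~\cite[Ch.~3E]{bz} and Murasugi~\cite[Ch.~7]{mur}. Your sketch is therefore strictly more than what the paper offers, and it follows the same classical route developed in those references (knot group presentation $\fg{a,b\mid a^p=b^q}$, center, quotient $\Z/\card{p}*\Z/\card{q}$, then an orientation-sensitive invariant for chirality), ultimately citing the same sources for the step your argument cannot do by group theory alone. So there is nothing to compare against, and your outline is the standard correct one.

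One small caution: your alternative primeness argument via ``the complement is Seifert fibered with only boundary-parallel essential tori'' is heavier machinery than the center argument and would itself need justification; the center argument you give first is cleaner and self-contained once you know composite knot groups are centerless. Either way this is all subsumed by the references you and the paper both cite.
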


Given a torus knot $t(p,q)$ where $p$ and $q$ are relatively prime integers and $\card{p},\card{q}\geq 2$,
we define the \textbf{torus theta-curve} $\theta(p,q)$ as follows.
In short, $\theta(p,q)$ is $t(p,q)$ union a little arc $e_3$ in $T$
as in Figures~\ref{fig:torustc}, \ref{fig:toruslm}, and \ref{fig:torusuc}.
More precisely, consider first the case where $p>0$ and $q>0$.
We choose a basepoint, denoted $v_1$, in $t(p,q)$ where $l$ and $m$ meet.
Follow $m$ from $v_1$ (using the orientation of $m$) until $t(p,q)$ is first encountered;
call that point $v_2$.
The oriented arc just described in $m$ from $v_1$ to $v_2$ is denoted $e_3$.
The oriented arc in $t(p,q)$ out of $v_1$ and ending at $v_2$ is denoted $e_1$.
The remaining arc in $t(p,q)$ out of $v_1$ and ending at $v_2$ is denoted $e_2$.
That completes our definition of $\theta(p,q)$ in case $p>0$ and $q>0$.
\begin{figure}[htbp!]
    \centerline{\includegraphics[scale=1.0]{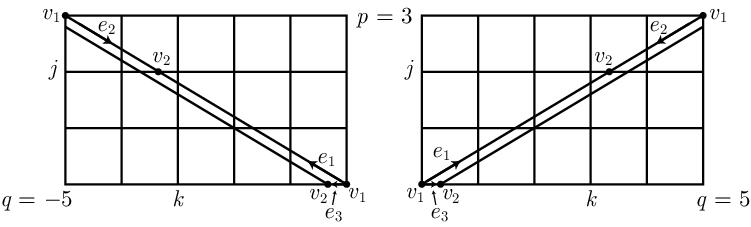}}
    \caption{Portions of the universal cover $\R^2$ of $T$ containing lifts of the arcs of torus theta-curves
		$\theta(3,-5)$ (left) and $\theta(3,5)$ (right).}
\label{fig:torusuc}
\end{figure}
Figure~\ref{fig:torusuc} (right) depicts lifts of the arcs of $\theta(p,q)$ to the universal cover $\R^2$ of $T$.
For notational convenience, lifts of $v_1$, $v_2$, $e_1$, $e_2$, and $e_3$ are denoted by the same symbols.
Now, we define torus theta-curves for arguments of other sign combinations using the involutions $\rho$ and $R$ which commute.
Using the same relatively prime integers $p\geq2$ and $q\geq2$, we define:
\begin{equation}\label{deftc}
\begin{aligned}
	&\theta(p,-q)&&=R \pa{\theta\pa{p,q}}\\
	&\theta(-p,-q)&&=\rho \pa{\theta\pa{p,q}}\\
	&\theta(-p,q)&&=R \pa{\theta\pa{-p,-q}}=\rho\pa{\theta\pa{p,-q}}
\end{aligned}
\end{equation}
Figure~\ref{fig:torustcdef} depicts local pictures of the definitions~\eqref{deftc} in $T$ near $v_1$.
\begin{figure}[htbp!]
    \centerline{\includegraphics[scale=1.0]{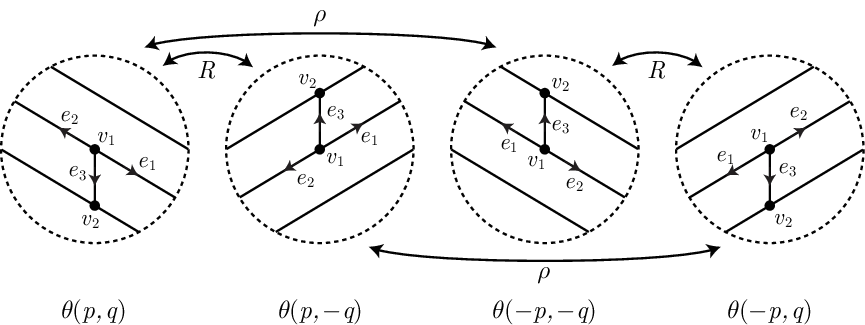}}
    \caption{Close-up view of $\theta(p,q)$ in $T$ near $v_1$ in case $p>0$ and $q>0$ (left) along with the images
		$\theta(p,-q)$, $\theta(-p,-q)$, and $\theta(-p,q)$ of $\theta(p,q)$ under the involutions $\rho$ and $R$.}
\label{fig:torustcdef}
\end{figure}

Thus, we have the following relations between theta-curves where $a$ and $b$ are relatively prime integers and $\card{a},\card{b}\geq 2$.
\begin{equation}\label{symmtcc}
\begin{aligned}
    \theta(a,b)&\sim \theta(-a,-b) && \tn{by the rotation $\rho$ of $S^3$}\\
    \theta(a,b)&\sim \theta(b,a) && \tn{by the rotation $\sigma$ of $S^3$}\\
    \theta(a,b)&\approx \theta(a,-b) && \tn{by the reflection $R$ of $S^3$} \\
    \theta(a,b)&\approx \theta(-a,b) && \tn{by the composition $\rho\circ R=R\circ\rho$}
\end{aligned}
\end{equation}

In general, sccs in $T$ are classified up to isotopy of $T$ by their intersection numbers with $l$ and $m$.
Let $H_1(T)$ denote the dimension one integer homology group of $T$.
Equip $H_1(T)\cong \Z^2$ with the ordered basis $\br{l,m}$ so that $l$ corresponds to $(1,0)\in\Z^2$ and $m$ corresponds to $(0,1)\in\Z^2$. 
In $H_1(T)$, we have $t(p,q)=e_1-e_2$.
Recall the definitions of the constituent knots $k_1$, $k_2$, and $k_3$ of $\theta(p,q)$ in Figure~\ref{fig:kconv}.
In $H_1(T)$, we have:
\begin{align*}
k_1+k_2&=-k_3\\
k_2+k_3&=-k_1\\
k_3+k_1&=-k_2
\end{align*}
Each constituent knot $k_i$ of $\theta(p,q)$ is a torus knot since $\theta(p,q)$ lies in the torus $T$.
Cutting $T$ along $t(p,q)$ yields an \textbf{annulus} $A$.
The arc $e_3$ is neatly embedded and essential in $A$ ($e_3$ has one boundary point in each boundary circle of $A$).
The algebraic intersection number---see Stillwell~\cite[p.~209]{stillwell}---of two torus knots $t=t(p,q)$ and $t'=t(p',q')$ is:
\[
I(t,t')=\det
\begin{bmatrix}
  p & p'\\
  q & q'
\end{bmatrix}
\]
Note that $I(l,m)=1$ and $I(m,l)=-1$.

We close this section with a lemma that will be useful for proving the theta-curves $\theta(p,q)$ are prime.
Recall that $T$ divides $S^3$ into two solid tori $T_1$ and $T_2$.

\begin{lemma}\label{lem:nd}
Let $\theta=\theta(p,q)$ be a torus theta-curve where $p$ and $q$ are relatively prime integers and $\card{p},\card{q}\geq 2$.
There does not exist a $2$-disk $\Delta$ neatly embedded in $T_1$ or $T_2$ such that $C=\partial{\Delta}$
is in general position with respect to $\theta$ and meets $\theta$ in exactly one point.
\end{lemma}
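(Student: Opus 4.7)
The plan is to argue by contradiction. Suppose such a disk $\Delta$ exists, neatly embedded in $T_i$ for some $i\in\{1,2\}$, with $C=\partial\Delta\subseteq T$ meeting $\theta$ transversally in a single point. The idea is to split into cases according to whether $C$ is essential or inessential on $T$ and derive a contradiction in each case—one by a topological parity argument, the other by an algebraic intersection calculation.

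First I would handle the case where $C$ is \emph{inessential} on $T$. Then $C$ bounds a $2$-disk on $T$, so $T-C$ has exactly two components (an open disk and a once-punctured torus). Let $p$ be the unique point of $C\cap\theta$, lying in the interior of some edge $e$ of $\theta$. The other two edges of $\theta$ are disjoint from $C$, so each lies entirely in one component of $T-C$; since they share the vertices $v_1$ and $v_2$, both vertices lie in a common component $X$ of $T-C$. The remaining edge $e$ therefore has both endpoints in $X$ yet crosses $C$ transversally in exactly one point—a parity contradiction.

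Next I would handle the case where $C$ is \emph{essential} on $T$. Since $C$ is a simple essential scc, its class in $H_1(T)\cong\Z^2$ is primitive. Because $\partial\Delta=C$ and $\Delta\subseteq T_i$, the class $[C]$ lies in the kernel of $H_1(T)\to H_1(T_i)$, which is the infinite cyclic subgroup generated by the meridian class $\mu_i$ of $T_i$ (one of $l$ or $m$). Primitivity forces $[C]=\pm\mu_i$. Computing the algebraic intersection with $t(p,q)=pl+qm$ using the determinantal formula $I$ recalled above gives $|I(C,t(p,q))|\in\{|p|,|q|\}\geq 2$. Hence the geometric intersection satisfies $|C\cap t(p,q)|\geq 2$, so $|C\cap\theta|\geq 2$, contradicting the hypothesis.

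The most delicate step is the essential case: one must know that a compressing disk for the solid torus $T_i$ has boundary in the meridional class up to sign, which relies on the standard identification of the kernel of $H_1(\partial T_i)\to H_1(T_i)$ together with primitivity of the class of a simple essential curve. Once that is in hand, the intersection number bound $\min(|p|,|q|)\geq 2$ does the rest, and the inessential case is a quick parity argument on $\theta$.
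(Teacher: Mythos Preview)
Your proof is correct, but it is organized differently from the paper's. The paper splits into cases according to which edge of $\theta$ the curve $C$ meets. When $C$ meets $e_1$ or $e_2$, the paper observes that $C$ then meets $t(p,q)$ once, hence is essential on $T$, and---since $C$ bounds a compressing disk of $T_i$---is isotopic to $l$ or $m$, giving the same algebraic intersection contradiction you use. When $C$ meets $e_3$, the paper instead argues that $C$ lies in the annulus $A=T\setminus t(p,q)$ and, meeting the essential arc $e_3$ once, must be parallel to $\partial A=t(p,q)$; gluing $\Delta$ to a sub-annulus of $A$ then produces a disk in $S^3$ with boundary $t(p,q)$, contradicting the knottedness of $t(p,q)$. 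Your essential/inessential dichotomy is more uniform: the homological computation in your essential case dispatches all three edges at once (since $|C\cap t(p,q)|\geq |I(C,t(p,q))|\geq 2$ already forces $|C\cap\theta|\geq 2$), and your inessential case is a clean parity argument that does not appear explicitly in the paper. In particular, your argument never needs to invoke that $t(p,q)$ is knotted in $S^3$, relying only on $|p|,|q|\geq 2$ via the intersection form; the paper's $e_3$ case does use knottedness.
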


\begin{proof}
Suppose, by way of contradiction, that there exists such a $2$-disk $\Delta$.
If $C$ meets $e_1$ or $e_2$, then $C$ meets $t(p,q)$ once.
That implies $C$ is essential in $T$.
As $C=\partial{\Delta}$ and $\Delta$ is neatly embedded in $T_1$ or $T_2$,
$C$ is isotopic in $T$ to $l$ or $m$.
That is a contradiction since $t(p,q)$ has algebraic intersection number $-q$ with $l$ and $p$ with $m$.
So, assume $C$ meets $e_3$ once.
As $e_3$ is essential in the annulus $A$, $C$ is topologically parallel to the boundary circles of $A$.
Thus, $\Delta$ union an annulus in $A$ is a $2$-disk embedded in $S^3$ with boundary $t(p,q)$.
That is a contradiction since $t(p,q)$ is knotted in $S^3$.
\end{proof}

\section{Prime torus theta-curves}\label{sec:prime}

In this section, we prove the following theorem.

\begin{theorem}\label{thm:prime}
If $p$ and $q$ are relatively prime integers and $\card{p},\card{q}\geq 2$, then the torus theta-curve $\theta = \theta(p,q)$ is prime. 
\end{theorem}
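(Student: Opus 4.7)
The plan is to verify the three defining conditions for $\theta = \theta(p,q)$ to be prime. Nontriviality is immediate: the constituent knot $k_3 = e_1 - e_2 = t(p,q)$ is a nontrivial torus knot by hypothesis, while every trivial theta-curve has all constituent knots trivial. For the other two conditions I would argue by contradiction. Suppose $\theta \approx \theta' \#_2 K$ with $K$ nontrivial, or $\theta \approx \theta_1 \#_3 \theta_2$ with both $\theta_i$ nontrivial. In either case the definitions of the connected sums produce a splitting 2-sphere $\Sigma \subset S^3$ meeting $\theta$ transversely in $n$ points, where $n = 2$ (both on a single edge $e_i$) or $n = 3$ (one on each edge).

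The first task is to reduce $\Sigma \cap T$. Place $\Sigma$ in general position with $T$ and isotope it, fixing $\theta$ setwise, to minimize the number of simple closed curves in $\Sigma \cap T$. The key reduction is an innermost-disk argument on $\Sigma$: an innermost scc $C$ of $\Sigma \cap T$ bounds a disk $D \subset \Sigma$ with $D \cap T = C$, so $D$ is a neatly embedded 2-disk in some $T_j$ with $D \cap \theta = C \cap \theta$. Lemma~\ref{lem:nd} then forbids $\card{C \cap \theta} = 1$, and the case $\card{C \cap \theta} = 0$ leads to a contradiction with minimality: if $C$ were essential in $T$ it would lie in the annulus $T - t(p,q)$ and be parallel to $t(p,q)$ there, but since $C$ bounds a disk in $T_j$ this would make $t(p,q)$ unknotted; and if $C$ were inessential, it would bound a disk in $T$ that by connectedness of $\theta$ and essentiality of $t(p,q)$ in $T$ must be disjoint from $\theta$, permitting an isotopy of $\Sigma$ that removes $C$. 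Hence every innermost scc of $\Sigma \cap T$ meets $\theta$ in at least two points. Since any family of $k \geq 2$ disjoint sccs on a 2-sphere contains at least two innermost members, the bound $n \leq 3$ forces $\Sigma \cap T$ to consist of a single scc $C$ with $\card{C \cap \theta} = n$. Transversality of $\Sigma$ with $T$ then places the two disks of $\Sigma - C$ one in each $T_j$, so $C$ bounds disks in both solid tori and hence is inessential in $T$; it bounds a disk $\Delta \subset T$, and one of the balls $B$ cut off by $\Sigma$ satisfies $B \cap T = \Delta$.

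The final step applies Lemma~\ref{lem:ubpp} with witness $\Delta$ to identify an unknotted ball-arc or ball-prong pair, yielding the contradiction. In the $\#_3$ case, each edge of $\theta$ crosses $C$ exactly once, so $v_1, v_2$ lie on opposite sides of $C$ in $T$; taking $v_1 \in \Delta$, the 3-prong at $v_1$ lies in $\Delta \subset B$, whence $(B, B \cap \theta)$ is unknotted and the corresponding $\theta_i$ is trivial---contradicting the assumption. In the $\#_2$ case, $e_i$ crosses $C$ twice, so $v_1, v_2$ lie on the same side of $C$, the middle arc $\alpha$ of $e_i$ lies on the opposite side, and the two non-$e_i$ edges $e_j, e_k$ stay on the vertex side and together realize the constituent knot $k_i$ as a closed curve on $T$. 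The main obstacle is to show that $k_i$ is essential in $T$ for each $i \in \cpa{1,2,3}$ when $\card{p}, \card{q} \geq 2$; granting this, $k_i$ cannot fit in a disk of $T$, ruling out $v_1, v_2 \in \Delta$, so $\alpha \subset \Delta \subset B$, and Lemma~\ref{lem:ubpp} yields $(B, \alpha)$ unknotted and hence $K$ trivial. Essentiality of $k_i$ can be verified by a short homological computation of path-class displacements in the universal cover $\R^2$ of $T$, using that $v_2$ is the first intersection of $t(p,q)$ with the meridian $m$ past $v_1$.
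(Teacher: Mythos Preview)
Your argument is correct in outline and shares its scaffolding with the paper's proof: both reduce a hypothetical splitting sphere $\Sigma$ to a single curve $C=\Sigma\cap T$ and then exhibit a disk in $T$ containing the relevant arc or prong, invoking Lemma~\ref{lem:ubpp}. The endgame, however, is genuinely different. The paper never argues abstractly that $C$ is inessential in $T$; instead it works in the annulus $A$ obtained by cutting $T$ along $t(p,q)$ and performs a combinatorial case analysis on how the arcs of $C$ in $A$ can reconnect (the numbered-endpoint arguments in Figures~\ref{fig:annulus21}--\ref{fig:annulus31}), in each admissible case explicitly drawing the disk. Your route is more conceptual: you observe that the two disks of $\Sigma-C$ sit in opposite solid tori, so $C$ bounds disks in both $T_1$ and $T_2$ and hence is inessential in $T$, bounding $\Delta\subset T$; you then use essentiality of every constituent knot $k_i$ in $T$ to force the knotted arc (in the $\#_2$ case) into $\Delta$. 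This trades the paper's picture-driven case check for the homological computation of constituent knots that the paper postpones to Proposition~\ref{prop:ck}. Each approach has its merit: the paper's is self-contained within Section~\ref{sec:prime}, while yours is cleaner once those homology classes are in hand. One small technical caveat: your elimination of an innermost-on-$\Sigma$ curve $C$ with $C\cap\theta=\emptyset$ is not quite complete as written, since the disk $C$ bounds in $T$ may contain other components of $\Sigma\cap T$, obstructing the isotopy across the ball; the standard remedy (which the paper uses) is to take $C$ innermost in $T-\theta$ rather than on $\Sigma$.
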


\begin{proof}
The constituent knot $k_3 = t(p,q)$ is a non-trivial torus knot, so $\theta$ is knotted. 

Suppose, by way of contradiction, that $\theta \approx \theta' \#_2 K$ where $\theta' \subseteq S^3$ is a possibly unknotted theta-curve and $K \subseteq S^3$ is a nontrivial knot. Then, there is a 2-sphere $\Sigma \subseteq S^3$ that is transverse to $T$ and $\theta$ such that $\Sigma$ meets exactly one arc $e_i$ of $\theta$ in exactly two points $p_1, p_2 \in \Int{e_i}$. Further $\Sigma$ separates $S^3$ into two 3-balls $B_1$ and $B_2$. Without loss of generality, $B_1$ meets $\theta$ in a subarc $\alpha$ of $e_i$, $(B_1, \alpha)$ is a knotted ball-arc pair, and $v_1$ and $v_2$ lie in $\Int{B_2}$. We have $\Sigma \cap T$ is finitely many pairwise disjoint sccs.
Each such scc either meets $\theta$ or is disjoint from $\theta$. Each of the latter type is inessential in $T - \theta \approx \Int{D^2}$. We will eliminate all of the latter type by ambient isotopy of $\Sigma$ leaving $\theta$ unmoved. 

Let C be a scc of the latter type that is innermost in $T - \theta$.
So, $C = \partial \Delta$ where $\Delta$ is a 2-disk in $T - \theta$ and
$\Delta \cap \Sigma = C$.
As $\Sigma$ separates $S^3$ into $B_1$ and $B_2$, $\Delta$ lies in one of those 3-balls denoted $B_j$.
The scc $C \subseteq \Sigma$ separates $\Sigma$ into two 2-disks, $D_1$ and $D_2$. 
By connectedness, $p_1$ and $p_2$ both lie in $D_1$ or $D_2$. Without loss of generality, let $p_1$ and $p_2$ both lie in $D_1$. So $\Delta \cup D_2$ is a 2-sphere in $S^3$ that bounds a 3-disk $E$ that is disjoint from $\theta$. Use $E$ to push $D_2$ slightly past $\Delta$ to a parallel copy of $\Delta$. That eliminates at least $C$ from $\Sigma \cap T$ as desired. Repeat that operation finitely many times to eliminate all sccs in $\Sigma \cap T$ that do not meet $\theta$. 

Now, each scc in $\Sigma \cap T$ meets $\theta$.
As $\Sigma \cap \theta = \{p_1, p_2\}$ we see that $\Sigma \cap T$ consists of one or two sccs.
Suppose, by way of contradiction, that $\Sigma \cap T$ consists of two sccs, $C_1$ and $C_2$ where $p_1 \in C_1$ and $p_2 \in C_2$.
Then in $\Sigma$ we obtain two different 2-disks that contradict Lemma~\ref{lem:nd}.
Thus, $\Sigma \cap T$ is one scc, $C$, that meets $\theta$ in exactly $p_1, p_2 \in e_i$. There are three cases to consider. 
    
Case 1. $C$ meets $e_1$ twice as in Figure~\ref{fig:annulus21} (left). 
\begin{figure}[htbp!]
\centerline{\includegraphics[scale=1.0]{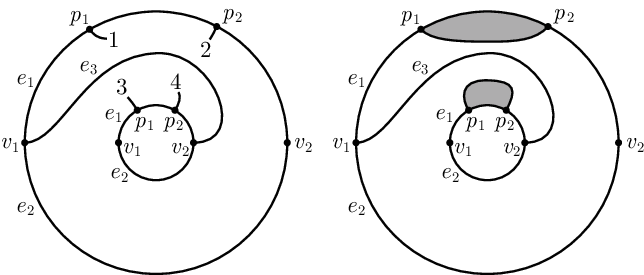}}
 \caption{The annulus $A$ in case $C$ meets $e_1$ twice (left) and the disks bounded by arcs of $C$ and $e_3$ (right).}
\label{fig:annulus21}
\end{figure}
In that figure, endpoints $1, 2, 3$, and $4$ of four arcs are indicated.
Those arcs are subarcs of $C$
that must connect in $A$ to form $C$ without meeting $\theta$ at any points other than $p_1$ and $p_2$.
If $1$ connects to $2$, then $3$ must connect to $4$.
Then the two arcs of $C$ in the annulus bound two 2-disks along arcs of $e_1$ as in Figure~\ref{fig:annulus21} (right).
Those two 2-disks form a neatly embedded 2-disk, $\Delta$, contained in $B_1$ such that $\Delta$ contains $\alpha$. That contradicts Lemma~\ref{lem:ubpp}, since $(B_1, \alpha)$ is a knotted ball-arc pair. If $1$ connects to $3$, then $C$ does not contain $p_1$, a contradiction. If $1$ connects to $4$, then $2$ and $3$ are prevented from connecting, a contradiction. 

Case 2. $C$ meets $e_2$ twice. This case is almost identical to Case 1. 

Case 3. $C$ meets $e_3$ as in Figure~\ref{fig:annulus22} (left).
\begin{figure}[htbp!]
\centerline{\includegraphics[scale=1.0]{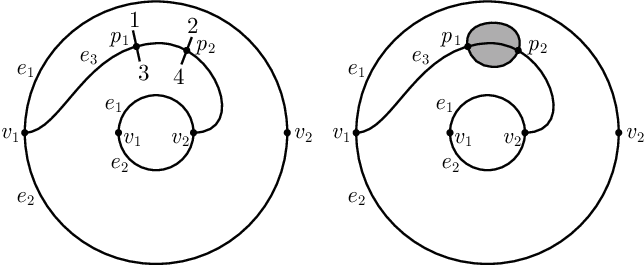}}
\caption{The annulus $A$ in case $C$ meets $e_3$ twice (left) and the disk bounded by $C$ (right).}
\label{fig:annulus22}
\end{figure}
If $1$ connects to $2$, then $3$ must connect to $4$.
Then, $C$ bounds a 2-disk, $\Delta$, in the annulus $A$ as in Figure~\ref{fig:annulus22} (right) that is neatly embedded in $B_1$ and contains $\alpha$. That contradicts Lemma~\ref{lem:ubpp}, since $(B_1, \alpha)$ is a knotted ball-arc pair. If $1$ connects to $3$, then $C$ does not contain $p_2$, a contradiction. If $1$ connects to $4$, then $2$ and $3$ are prevented from connecting, a contradiction.

Each of the three cases yielded a contradiction.
Hence, $\theta \not\approx \theta' \#_2 K$.

Suppose, by way of contradiction, that $\theta \approx \theta_1 \#_3 \theta_2$ where $\theta_1$ and $\theta_2$ are knotted theta-curves. 
Then, there is a 2-sphere $\Sigma \subseteq S^3$ that is transverse to $T$ and $\theta$ such that $\Sigma$ meets each $e_i$ at exactly one point $p_i \in \Int{e_i}$. Further, $\Sigma$ separates $S^3$ into two 3-balls $B_1$ and $B_2$ where $v_1 \in \Int{B_1}$ and $v_2 \in \Int{B_2}$, and $(B_1, B_1 \cap \theta)$ and $(B_2, B_2 \cap \theta)$ are knotted ball-prong pairs. We have $\Sigma \cap T$ is finitely many pairwise disjoint sccs. Each such scc either meets $\theta$ or is disjoint from $\theta$. Each of the latter type is inessential in $T - \theta \approx \Int{D^2}$. We will eliminate all of the latter type by ambient isotopy of $\Sigma$ leaving $\theta$ unmoved. 

Let C be a scc of the latter type that is innermost in $T - \theta$.
So, $C = \partial \Delta$ where $\Delta$ is a 2-disk in $T - \theta$ and $\Delta \cap \Sigma = C$.
As $\Sigma$ separates $S^3$ into $B_1$ and $B_2$, $\Delta$ lies in one of those 3-balls denoted $B_j$.
The scc $C \subseteq \Sigma$ separates $\Sigma$ into two 2-disks, $D_1$ and $D_2$. 
By connectedness, $p_1$, $p_2$, and $p_3$ all lie in $D_1$ or $D_2$.
Without loss of generality, assume that they all lie in $D_1$.
So $\Delta \cup D_2$ is a 2-sphere in $S^3$ that bounds a 3-disk $E$ that is disjoint from $\theta$.
Use $E$ to push $D_2$ slightly past $\Delta$ to a parallel copy of $\Delta$.
That eliminates at least $C$ from $\Sigma \cap T$ as desired.
Repeat that operation finitely many times to eliminate all sccs in $\Sigma \cap T$ that do not meet $\theta$. 

Now, each scc in $\Sigma\cap T$ meets $\theta$.
As $\Sigma\cap\theta=\cpa{p_1,p_2,p_3}$, we see that $\Sigma\cap T$ consists of one, two, or three sccs,
and those topological possibilities are depicted in Figure~\ref{fig:spherecases}.
\begin{figure}[htbp!]
    \centerline{\includegraphics[scale=1.0]{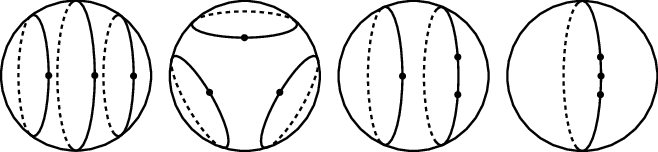}}
    \caption{The sphere $\Sigma$ in cases where $\Sigma \cap T$ consists of three, two, or one scc.}
\label{fig:spherecases}
\end{figure}
The first three possibilities each contain a $2$-disk that contradicts Lemma~\ref{lem:nd}.
Thus, $\Sigma\cap T$ is one scc, $C$, that meets each $e_i$ at exactly one point $p_i$ for $i=1,2,3$ as in Figure~\ref{fig:annulus31} (left).
\begin{figure}[htbp!]
    \centerline{\includegraphics[scale=1.0]{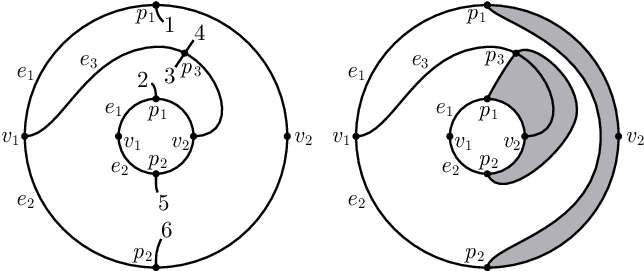}}
    \caption{The annulus $A$ where $C$ meets each arc $e_i$ once (left) and $2$-disks bounded by arcs of $C$, $e_1$, and $e_2$ (right).}
\label{fig:annulus31}
\end{figure}
In that figure, endpoints $1,2,\ldots,6$ of five arcs are indicated.
Those arcs are subarcs of $C$ that must connect in $A$ to form $C$
without meeting $\theta$ at any points other than $p_1$, $p_2$, and $p_3$.
Notice that $1$ cannot connect to $2$ since otherwise $p_2$ and $p_3$ do not lie in $C$,
$1$ cannot connect to $3$ since otherwise $6$ is closed off, and
$1$ cannot connect to $5$ since otherwise $4$ is closed off.
There are two remaining cases to consider.

Case 1. $1$ connects to $6$. Then, $4$ cannot connect to $2$ since otherwise $5$ is closed off,
and $4$ cannot connect to $3$ since $p_1,p_2,p_3$ all lie in $C$.
So, $4$ connects to $5$, and $2$ connects to $3$.
Now, the two arcs of $C$ in the annulus bound two $2$-disk along arcs of $e_1\cup e_2$ as in Figure~\ref{fig:annulus31} (right).
Those two $2$-disks form a neatly embedded $2$-disk, $\Delta$ contained in $B_1$ and containing the prong $B_1\cap\theta$.
That contradicts Lemma~\ref{lem:ubpp} since $(B_1,B_1\cap\theta)$ is a knotted ball-prong pair.

Case 2. $1$ connects to $4$. Then, $3$ cannot connect to $2$ since $p_1,p_2,p_3$ must all lie in $C$,
and $3$ cannot connect to $5$ since otherwise $2$ is closed off.
So, $3$ connects to $6$, and $2$ connects to $5$.
As in the previous case, that yields two $2$-disks in the annulus whose union $\Delta$ is
a neatly embedded $2$-disk in $B_1$ containing the prong $B_1\cap\theta$.
That contradicts Lemma~\ref{lem:ubpp} since $(B_1,B_1\cap\theta)$ is a knotted ball-prong pair.

Both cases yielded a contradiction.
Hence, $\theta \not\approx \theta_1 \#_3 \theta_2$.
Therefore, $\theta$ is a prime theta-curve.
\end{proof}

\section{Constituent knots}\label{sec:ck}

We determine the constituent knots of each torus theta-curve $\theta(p,q)$ and compute examples.
As $\theta(p,q)$ lies in the torus $T$, each constituent knot $k_i$ of $\theta(p,q)$ is a torus knot---perhaps the unknot.
For each positive integer $n$, let $\Z_n=\Z/n\Z$ denote the ring of integers modulo $n$.
Let $\br{k}_n\in\Z_n$ denote the element in $\Z_n$ represented by the integer $k$.
Recall that $\br{k}_n\in\Z_n$ has a multiplicative inverse if and only if $k$ and $n$ are relatively prime.
In that case, the multiplicative inverse of $\br{k}_n$ is denoted $\br{k}_n^{-1}$ and $\br{-k}_n^{-1}=-\br{k}_n^{-1}$.

\begin{proposition}\label{prop:ck}
Let $p$ and $q$ be relatively prime integers where $p\geq2$ and $q\geq2$.
Define the integers $j$ and $k$ by:
\begin{alignat*}{3}
\br{j}_p &= \br{q}_p^{-1} && \in \Z_p \quad && \tn{where } 1\leq j\leq p-1\\
\br{k}_q &=\br{-p}_q^{-1} && \in \Z_q \quad && \tn{where } 1\leq k\leq q-1
\end{alignat*}
Then, the constituent knots of $\theta(p,q)$ are:
\begin{alignat*}{2}
k_3 &= e_1-e_2 &&= \phantom{-}t(p,q)\\
k_2 &= e_3-e_1 &&= -t(j,k)\\
k_1 &= e_2-e_3 &&= -t(p-j,q-k)
\end{alignat*}
In particular, the set of unoriented constituent knots of $\theta(p,q)$ is:
\[
\cpa{\ t(p,q),\ t(j,k),\ t(p-j,q-k) \ }
\]
where $1\leq p-j\leq p-1$ and $1\leq q-k\leq q-1$.

Furthermore, the set of unoriented constituent knots of $\theta(p,-q)$ is:
\[
\cpa{\ t(p,-q),\ t(j,-k),\ t(p-j,k-q) \ }
\]
the set of unoriented constituent knots of $\theta(-p,-q)$ is:
\[
\cpa{\ t(-p,-q),\ t(-j,-k),\ t(j-p,k-q) \ }
\]
and the set of unoriented constituent knots of $\theta(-p,q)$ is:
\[
\cpa{\ t(-p,q),\ t(-j,k),\ t(j-p,q-k) \ }
\]
where $1-p\leq -j\leq -1$, $1-q\leq -k \leq -1$, $1-p\leq j-p\leq -1$, and $1-q\leq k-q\leq -1$.
\end{proposition}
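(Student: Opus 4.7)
The plan is to work in the universal cover $\R^2$ of $T$ equipped with the basis $[l,m]$, reading off each arc of $\theta(p,q)$ as a displacement vector, and then recovering the homology class of each constituent knot as the net displacement of a lift that starts and ends at lifts of $v_1$. Throughout, I identify $v_1$ with the origin, so that $l$ lifts to the $x$-axis and $m$ lifts to the $y$-axis. Since $t(p,q)$ lifts to the union of all lines through lattice points of slope $q/p$, the lift of $t(p,q)$ through the origin passes through the lattice point $(p,q)$.

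First I would identify the three arcs. The lifts of $v_2$ are exactly the points $(0,1/p)+\Z^2$: indeed, following $m$ upward from the origin, the first intersection with a lift of $t(p,q)$ occurs at the smallest $y>0$ for which the vertical line $x=0$ meets a line of slope $q/p$ through a lattice point, and B\'ezout's identity (together with $\gcd(p,q)=1$) places this point at $y=1/p$. Thus $e_3$ has lift vector $(0,1/p)$. For $e_1$, I lift the oriented arc of $t(p,q)$ from $v_1$ until it first reaches a lift of $v_2$: this is the first $t>0$ with $(pt,qt)\equiv(0,1/p)\pmod{\Z^2}$, which forces $t=j/p$ with $qj\equiv 1\pmod p$, and the integer $k:=(qj-1)/p$ then lives in the desired range. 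Hence $e_1$ has lift vector $(j,\,k+1/p)$, with $1\le j\le p-1$ and $1\le k\le q-1$. The relation $qj-kp=1$ gives both $[j]_p=[q]_p^{-1}$ and $[k]_q=[-p]_q^{-1}$, justifying the definitions of $j$ and $k$. For $e_2$, I use that the oriented loop $e_1-e_2$ is $t(p,q)$, hence has total lift vector $(p,q)$; so $e_2$ has lift vector $(j-p,\,k+1/p-q)$.

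Next I would compute homology classes by subtracting the lift vectors, noting that an arc sum of the form $e_i-e_j$ starts and ends at lifts of $v_1$, hence yields a well-defined element of $H_1(T)=\Z l\oplus\Z m$:
\begin{align*}
k_3 &= e_1-e_2 = (p,q),\\
k_2 &= e_3-e_1 = (-j,-k),\\
k_1 &= e_2-e_3 = (j-p,\,k-q).
\end{align*}
Each $k_i$ is a scc on $T$, hence a torus knot whose oriented isotopy class is determined by its homology class, so $k_3=t(p,q)$, $k_2=t(-j,-k)=-t(j,k)$, and $k_1=t(j-p,k-q)=-t(p-j,q-k)$, using the identity $-t(a,b)=t(-a,-b)$ from~\eqref{symmtk}. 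The bounds $1\le p-j\le p-1$ and $1\le q-k\le q-1$ follow immediately from those on $j$ and $k$. This proves the statement for $\theta(p,q)$.

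Finally, for the three sign variants I apply the involutions $R$ and $\rho$. By~\eqref{deftc}, $\theta(p,-q)=R(\theta(p,q))$, $\theta(-p,-q)=\rho(\theta(p,q))$, and $\theta(-p,q)=R\circ\rho(\theta(p,q))$. Since $R$ acts on $H_1(T)$ by $(a,b)\mapsto(a,-b)$ and $\rho$ acts by $(a,b)\mapsto(-a,-b)$, each involution sends the unoriented torus knot $t(a,b)$ to the corresponding torus knot obtained by flipping signs, and the three listed unoriented constituent sets fall out by applying these sign changes to $\{t(p,q),t(j,k),t(p-j,q-k)\}$. The only subtle point---and the one I expect to take the most care---is the lift computation for $e_1$, in particular verifying that the integer $k=(qj-1)/p$ really does lie in $\{1,\dots,q-1\}$; this boils down to showing $qj\ne 1$ under the hypotheses $p,q\ge 2$, so that $qj\ge p+1$, together with the upper estimate $qj\le q(p-1)$.
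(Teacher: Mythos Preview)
Your proof is correct and follows essentially the same approach as the paper's: lift to the universal cover of $T$, locate the endpoint of the lift of $e_1$ by solving the B\'ezout equation $jq-kp=1$, read off the homology classes of the constituent knots, and then transport the result to the other sign combinations via $R$ and $\rho$. The only differences are cosmetic---you take $l$ along the $x$-axis and work with slope $q/p$, whereas the paper takes $m$ along the $x$-axis and writes the lift as the graph of $f(x)=\frac{p}{q}x$---and you are more explicit than the paper about the lift vectors of each edge and about the verification $1\le k\le q-1$.
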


\begin{proof}
Lift $t(p,q)$ to the universal cover $\R^2$ of $T$ as in Figure~\ref{fig:torusuc} (right).
The lift of $t(p,q)$ is the graph of $f(x)=\frac{p}{q}x$.
Thus, we seek the integer $1\leq k \leq q-1$ such that $\frac{p}{q}\pa{k+\frac{1}{p}}=j$ for some integer $1\leq j\leq p-1$.
Equivalently, we seek the integer $1\leq k \leq q-1$ such that:
\[
1=jq-kp
\]
for some integer $1\leq j\leq p-1$.
The unique solutions to the latter are those claimed in the proposition.
That determines the constituent knots of $\theta(p,q)$.

The remaining claims now follow by applying the reflection $R$ and the rotation $\rho$ to $\theta(p,q)$ and its constituent knots
(recall definition~\eqref{deftc} and relations~\eqref{symmtk}).
\end{proof}

\begin{corollary}\label{ttcinv}
Consider torus theta-curves $\theta(p,q)$ where $p$ and $q$ are relatively prime integers and $\card{p},\card{q}\geq 2$.
Among those theta-curves, $\cpa{\card{p},\card{q}}$ is a homeomorphism invariant of $\theta(p,q)$,
and whether the signs of $p$ and $q$ are the same or are opposite is an isotopy invariant of $\theta(p,q)$.
\end{corollary}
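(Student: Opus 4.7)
The plan is to read off both invariants from the constituent knots of $\theta(p,q)$ using Proposition~\ref{prop:ck} together with the classification of torus knots.

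First, I would identify $t(p,q)$ as a distinguished constituent of $\theta(p,q)$. By Proposition~\ref{prop:ck}, in every sign case the three unoriented constituent knots of $\theta(p,q)$ are $t(\pm p,\pm q)$, $t(\pm j,\pm k)$, and $t(\pm(\card{p}-j),\pm(\card{q}-k))$, where $1\leq j\leq \card{p}-1$ and $1\leq k\leq \card{q}-1$. The latter two therefore satisfy $\max(\card{a},\card{b})\leq \max(\card{p},\card{q})-1$, whereas the first attains $\max(\card{a},\card{b})=\max(\card{p},\card{q})$. By the classification of torus knots, $\max(\card{a},\card{b})$ (with the unknot assigned the value $1$) is an invariant of the unoriented torus knot $t(a,b)$ under both $\sim$ and $\approx$. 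Hence $t(p,q)$ is the unique constituent of $\theta(p,q)$ attaining the maximum of this invariant, since $\max(\card{p},\card{q})\geq 2$.

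From there I would deduce both claims. Because the multiset of unoriented constituent knots is preserved under $\sim$ and under $\approx$, the distinguished constituent $t(p,q)$ of $\theta(p,q)$ must match the distinguished constituent $t(p',q')$ of $\theta(p',q')$ whenever those torus theta-curves are related. If $\theta(p,q)\approx\theta(p',q')$, then $t(p,q)\approx t(p',q')$, and the torus knot classification gives $\cpa{\card{p},\card{q}}=\cpa{\card{p'},\card{q'}}$. If $\theta(p,q)\sim\theta(p',q')$, then $t(p,q)\sim t(p',q')$ as unoriented (hence oriented, by invertibility of torus knots) knots, so the classification yields $\cpa{p',q'}=\cpa{p,q}$ or $\cpa{p',q'}=\cpa{-p,-q}$; in either alternative, the signs of $p'$ and $q'$ agree if and only if the signs of $p$ and $q$ agree.

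I do not foresee a serious obstacle: Proposition~\ref{prop:ck} has done the bulk of the computation, and the remaining argument reduces to the observation that the numerical invariant $\max(\card{a},\card{b})$ separates $t(p,q)$ from the other two constituents, after which the torus knot classification closes both parts of the corollary.
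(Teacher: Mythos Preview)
Your proof is correct and follows essentially the same strategy as the paper: use Proposition~\ref{prop:ck} to show that $t(p,q)$ is the uniquely ``largest'' constituent knot of $\theta(p,q)$, then invoke the classification of torus knots. The only cosmetic difference is the size function---you use $\max(\card{a},\card{b})$ while the paper maximizes $a+b$ over all representing pairs (equivalently $\card{a}+\card{b}$)---but either choice separates $t(p,q)$ from the other two constituents and the argument closes identically.
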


\begin{proof}
Let $\theta(p,q)$ be such a theta-curve.
The idea is that $\theta(p,q)$ contains a unique ``largest'' constituent knot from which we can recover $\cpa{\card{p},\card{q}}$.
In more detail, let $C(p,q)$ be the set of pairs $(a,b)$ of integers such that $t(a,b)\approx k_i$ for some \emph{nontrivial} constituent knot $k_i$ of $\theta(p,q)$.
We exclude trivial constituent knots since $t(\pm1,n)$ is trivial for each integer $n$ and we seek a ``largest'' element of $C(p,q)$.
By Proposition~\ref{prop:ck} and the classification of torus knots, there are exactly two pairs $(a,b)$ in $C(p,q)$ for which $a+b$ is maximal,
namely $\pa{\card{p},\card{q}}$ and $\pa{\card{q},\card{p}}$.
If $\theta(p',q')$ is another such a theta-curve and $\theta(p',q')\approx\theta(p,q)$,
then $C(p',q')=C(p,q)$ and $\cpa{\card{p'},\card{q'}}=\cpa{\card{p},\card{q}}$, as desired.

Similarly, let $D(p,q)$ be the set of pairs $(a,b)$ of integers such that $t(a,b)\sim k_i$ for some \emph{nontrivial} constituent knot $k_i$ of $\theta(p,q)$.
By Proposition~\ref{prop:ck} and the classification of torus knots, either each pair $(a,b)$ in $D(p,q)$ has $a$ and $b$ with the same sign
or each pair $(a,b)$ in $D(p,q)$ has $a$ and $b$ with opposite signs.
If $\theta(p',q')$ is another such a theta-curve and $\theta(p',q')\sim\theta(p,q)$,
then $D(p',q')=D(p,q)$ and the sign property is identical for both sets, as desired.
\end{proof}

\begin{examples}\label{ex:ck}
\noindent
\begin{enumerate}[label=(\arabic*),leftmargin=*]\setcounter{enumi}{0}
\item \emph{Adjacent integers: $p=n$ and $q=n+1$.} Consider the torus theta-curve $\theta(n,n+1)$ where $n\geq2$.
Recall that adjacent integers are relatively prime.
As in Proposition~\ref{prop:ck}, we have $j=1$, $k=1$, and the set of unoriented constituent knots of $\theta(n,n+1)$ is:
\[
\cpa{\ t(1,1),\ t(n-1,n),\ t(n,n+1) \ }
\]
By Corollary~\ref{ttcinv}, the torus theta-curves $\theta(n,n+1)$ for $n\geq2$ are pairwise inequivalent up to homeomorphism of $S^3$.
\item \emph{Consecutive Fibonacci numbers: $p=F_n$ and $q=F_{n+1}$.} Recall the Fibonacci sequence defined by
$F_0=0$, $F_1=1$, and $F_n=F_{n-2}+F_{n-1}$ for each $n\geq2$.
The Euclidean algorithm implies that consecutive Fibonacci numbers are relatively prime.
Consider the torus theta-curve $\theta(F_n,F_{n+1})$ where $n\geq3$ (so $F_n\geq 2$).
Using Proposition~\ref{prop:ck} and the computer algebra system MAGMA,
we obtained the data in Table~\ref{table:fibonacci}.
\begin{table}[h!]\renewcommand{\arraystretch}{1.2}
\begin{center}
\begin{tabular}{c|c|c|c|c|c}
$\theta$ & $\theta(2,3)$ & $\theta(3,5)$ & $\theta(5,8)$ & $\theta(8,13)$ & $\theta(13,21)$ \\ \hline
$k_3$ & $\phantom{-}t(2,3)$ & $\phantom{-}t(3,5)$ & $\phantom{-}t(5,8)$ & $\phantom{-}t(8,13)$ & $\phantom{-}t(13,21)$ \\
$k_2$ & $-t(1,1)$ & $-t(2,3)$ & $-t(2,3)$ & $-t(5,8)$ & $-t(5,8)$ \\
$k_1$ & $-t(1,2)$ & $-t(1,2)$ & $-t(3,5)$ & $-t(3,5)$ & $-t(8,13)$
\end{tabular}
\end{center}
\vspace{3mm}
\caption{Constituent knots of torus theta-curves arising from consecutive Fibonacci numbers.}
\label{table:fibonacci}
\end{table}

It appears that the constituent knots of $\theta(F_n,F_{n+1})$ are
$k_3=t(F_n,F_{n+1})$, $-t(F_{n-1},F_n)$, and $-t(F_{n-2},F_{n-1})$
where the roles of $k_2$ and $k_1$ alternate depending on the parity of $n$.
Indeed, that is the case as we now prove.

Recall two classical Fibonacci identities:
\begin{align*}
F_{n-1}F_{n+1}-F_n F_n &=(-1)^n \quad \tn{for } n\geq 1\\
F_{n-1} F_n-F_{n-2}F_{n+1} &=(-1)^n \quad \tn{for } n\geq 2
\end{align*}
The former is Cassini's identity from 1680 and
the latter is a special case of Tagiuri's identity from 1901.
Both identities are readily proven by induction as follows.
For Cassini's identity, consider the determinant of the matrix
\begin{math}
\begin{bsmallmatrix}
  F_{n-1} & F_n\\\
  F_n & F_{n+1}
\end{bsmallmatrix}
\end{math}.
For the inductive step, add the second column to the first and then swap columns.
For Tagiuri's identity, consider the determinant of the matrix
\begin{math}
\begin{bsmallmatrix}
  F_{n-1} & F_{n-2}\\\
  F_{n+1} & F_n
\end{bsmallmatrix}
\end{math}.
For the inductive step, add the first column to the second and then swap columns.

If $n\geq4$ is even, then Cassini's identity is $F_{n-1}F_{n+1}-F_n F_n =1$.
So, $j=F_{n-1}=F_{n+1}^{-1} \in \Z_{F_n}$ and
$k=F_n=-F_n^{-1} \in \Z_{F_{n+1}}$.
By Proposition~\ref{prop:ck},
the constituent knots of $\theta(F_n,F_{n+1})$ are
$k_3=t(F_n,F_{n+1})$, $k_2=-t(F_{n-1},F_n)$, and $k_1=-t(F_{n-2},F_{n-1})$.

If $n\geq3$ is odd, then Tagiuri's identity is $F_{n-2}F_{n+1}-F_{n-1} F_n =1$.
So, $j=F_{n-2}=F_{n+1}^{-1} \in \Z_{F_n}$ and $k=F_{n-1}=-F_n^{-1} \in \Z_{F_{n+1}}$.
By Proposition~\ref{prop:ck},
the constituent knots of $\theta(F_n,F_{n+1})$ are
$k_3=t(F_n,F_{n+1})$, $k_2=-t(F_{n-2},F_{n-1})$, and $k_1=-t(F_{n-1},F_n)$.

Hence, for each $n\geq3$, the set of unoriented constituent knots of $\theta(F_n,F_{n+1})$ is:
\[
\cpa{\ t(F_{n-2},F_{n-1}),\ t(F_{n-1},F_n),\ t(F_n,F_{n+1}) \ }
\]
By Corollary~\ref{ttcinv}, the torus theta-curves
$\theta(F_n,F_{n+1})$ for $n\geq3$ are pairwise inequivalent up to homeomorphism of $S^3$.
\end{enumerate}
\end{examples}

We now classify torus theta-curves of the form $\theta(p,q)$.

\begin{theorem}\label{classtpq}
Consider a torus theta-curve $\theta(p,q)$ where $p$ and $q$ are relatively prime integers and $\card{p},\card{q}\geq 2$.
Let $\theta'(p',q')$ be another such torus theta-curve.
Then, $\theta(p',q')\approx\theta(p,q)$ if and only if
$\cpa{p',q'}$ equals $\cpa{p,q}$, $\cpa{-p,-q}$, $\cpa{p,-q}$, or $\cpa{-p,q}$.
And, $\theta(p',q')\sim\theta(p,q)$ if and only if
$\cpa{p',q'}$ equals $\cpa{p,q}$ or $\cpa{-p,-q}$.
\end{theorem}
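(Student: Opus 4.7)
The plan is to derive the sufficient conditions directly from the symmetries collected in~\eqref{symmtcc} and to derive the necessary conditions from Corollary~\ref{ttcinv}, which is the substantive input; almost all the real work has been done upstream.

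First I would address sufficiency. The relations in~\eqref{symmtcc} give $\theta(p,q)\sim\theta(-p,-q)$ via $\rho$ and $\theta(p,q)\sim\theta(q,p)$ via $\sigma$; combining these shows that if $\cpa{p',q'}$ equals $\cpa{p,q}$ or $\cpa{-p,-q}$ as unordered pairs, then $\theta(p',q')\sim\theta(p,q)$. The additional relations $\theta(p,q)\approx\theta(p,-q)$ via $R$ and $\theta(p,q)\approx\theta(-p,q)$ via $\rho\circ R$, combined again with $\sigma$, extend this to $\approx$-equivalence whenever $\cpa{p',q'}$ is any of the four listed sets.

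For the converse of the $\approx$-statement, suppose $\theta(p',q')\approx\theta(p,q)$. Corollary~\ref{ttcinv} asserts $\cpa{\card{p'},\card{q'}}=\cpa{\card{p},\card{q}}$, so as an ordered pair $(p',q')$ is one of eight tuples of the form $(\varepsilon_1 p,\varepsilon_2 q)$ or $(\varepsilon_1 q,\varepsilon_2 p)$ with signs $\varepsilon_i\in\cpa{\pm 1}$. Passing to unordered pairs collapses these eight tuples into the four sets $\cpa{p,q}$, $\cpa{-p,-q}$, $\cpa{p,-q}$, $\cpa{-p,q}$. Note that coprimality together with $\card{p},\card{q}\geq 2$ forces $\card{p}\neq\card{q}$, so the list is unambiguous.

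For the $\sim$-converse, I would first invoke the $\approx$-converse to narrow $\cpa{p',q'}$ down to those same four sets, and then use the second half of Corollary~\ref{ttcinv}, which says that whether the two arguments share a sign or have opposite signs is an isotopy invariant. Since simultaneously negating both arguments preserves that sign pattern, the sets $\cpa{p,q}$ and $\cpa{-p,-q}$ exhibit the same pattern as $(p,q)$, while $\cpa{p,-q}$ and $\cpa{-p,q}$ exhibit the opposite pattern. Hence the only admissible possibilities under $\sim$ are $\cpa{p',q'}=\cpa{p,q}$ or $\cpa{-p,-q}$. The genuine obstacle in this theorem lives entirely in Corollary~\ref{ttcinv}, which in turn depends on Proposition~\ref{prop:ck} and the classification of torus knots; granted those, the present statement is pure bookkeeping with the symmetry list~\eqref{symmtcc}.
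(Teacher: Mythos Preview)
Your proposal is correct and follows essentially the same approach as the paper: sufficiency from the symmetry relations~\eqref{symmtcc} and necessity from the two invariants in Corollary~\ref{ttcinv}. Your write-up is in fact slightly more careful than the paper's terse version (e.g.\ noting that coprimality with $\card{p},\card{q}\geq 2$ forces $\card{p}\neq\card{q}$), but the underlying argument is identical.
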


\begin{proof}
If $\theta(p',q')\approx\theta(p,q)$, then Corollary~\ref{ttcinv} implies that $\cpa{\card{p'},\card{q'}}=\cpa{\card{p},\card{q}}$, as desired.
The converse follows from the relations~\eqref{symmtcc}.

If $\theta(p',q')\sim\theta(p,q)$, then Corollary~\ref{ttcinv} implies that $\cpa{\card{p'},\card{q'}}=\cpa{\card{p},\card{q}}$
and either $p'$ and $q'$ have the same signs and $p$ and $q$ have the same signs,
or $p'$ and $q'$ have opposite signs and $p$ and $q$ have opposite signs, as desired.
The converse follows from the relations~\eqref{symmtcc}.
\end{proof}

\section{Classification of torus theta-curves}\label{sec:cttcs}

In this section, we prove the following classification of torus theta-curves.
Recall that a \textbf{torus theta-curve} is any theta-curve that lies on the standard torus $T\subseteq S^3$.
A knot or theta-curve is \textbf{knotted} provided it is knotted in $S^3$.
Each constituent knot of a torus theta-curve is a torus knot---perhaps unknotted.

\begin{theorem}[Classification of torus theta-curves]\label{thm:classttc}
If $\theta$ is a torus theta-curve, then exactly one of the following holds:
\begin{enumerate}[label=(\arabic*),leftmargin=*]\setcounter{enumi}{0}
\item All three constituent knots of $\theta$ are unknotted, in which case $\theta$ itself is unknotted.
\item At least one constituent knot of $\theta$ is knotted and another is inessential in $T$,
in which case $\theta=\theta_U \#_2\, t(a,b)$ is a $2$-connected sum of the trivial theta-curve
$\theta_U$ and a nontrivial torus knot $t(a,b)$.
\item At least one constituent knot of $\theta$ is knotted and each is essential in $T$,
in which case $\theta$ is isotopic in $T$ to $\theta(p,q)$ for some relatively prime integers $p$ and $q$ where $\card{p},\card{q}\geq2$.
\end{enumerate}
\end{theorem}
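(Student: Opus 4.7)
The three cases are mutually exclusive and jointly exhaustive: case (1) covers when every $k_i$ is unknotted, and otherwise some $k_i$ is knotted, at which point we split on whether some $k_j$ is inessential in $T$ (case (2)) or all are essential (case (3)). The plan is to establish the characterization in each case, with case (3) being the heart of the argument.

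For case (3), I would pick a constituent knot whose homology class $[k_i]=(p_i,q_i)\in H_1(T)\cong\Z^2$ has maximum $L^1$-norm $|p_i|+|q_i|$. Any knotted $k_i$ satisfies $|p_i|,|q_i|\geq 2$, so this maximum is attained by a knotted constituent; after relabeling, call it $k_3$, and by the symmetries~\eqref{symmtcc} assume $[k_3]=(p,q)$ with $p,q\geq 2$. Cutting $T$ along $k_3$ produces an annulus $A$, in which $e_3$ is neatly embedded. If $e_3$ were inessential in $A$, it would cobound a $2$-disk with a subarc of $k_3$, making $k_1$ or $k_2$ inessential in $T$---contradicting case (3); so $e_3$ is essential in $A$. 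I would then argue $e_3$ has wrap number zero around the core of $A$: a nonzero wrap $w$ shifts $[k_1]$ or $[k_2]$ by $\pm w(p,q)$, and the Bezout inequalities $1\leq j\leq p-1$ and $1\leq k\leq q-1$ of Proposition~\ref{prop:ck} imply that in every nonzero-wrap case one of $|[k_1]|_1,|[k_2]|_1$ strictly exceeds $|p|+|q|=|[k_3]|_1$, contradicting the maximality of $k_3$. With $e_3$ of wrap zero, an ambient isotopy of $T$ carries the pair $(k_3,e_3)$ to the standard pair defining $\theta(p,q)$.

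Case (2) is handled by constructing a splitting $2$-sphere. Let $k_3$ be an inessential constituent bounding a disk $D\subseteq T$. The arc $e_3$ cannot lie in $D$, else $\theta\subseteq D$ would be unknotted and every $k_i$ unknotted, contrary to hypothesis; so $e_3\subseteq T-D$. Take a thin regular neighborhood $B$ of $D$ in $S^3$, a $3$-ball, and arrange $\Sigma=\partial B$ to meet $\theta$ transversely in exactly two points on $e_3$ near $v_1,v_2$. Then $B\cap\theta$ consists of $e_1\cup e_2$ together with two short subarcs of $e_3$ at the vertices; extending $D$ by small strips alongside those subarcs yields a neatly embedded $2$-disk in $B$ containing $B\cap\theta$, so by the natural analogue of Lemma~\ref{lem:ubpp} for pairs with more than one vertex, $(B,B\cap\theta)$ is an unknotted ball-theta-minus-arc pair. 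The complementary ball-arc pair in $S^3-\Int{B}$ is the knotted ball-arc pair corresponding to the knotted constituent---say $k_1=e_2\cup e_3=t(a,b)$---and thus $\theta\approx\theta_U\#_2\,t(a,b)$.

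For case (1), I would show $\theta$ is unknotted by splitting on whether some $k_i$ is inessential in $T$. If $k_3$ is inessential, bounding $D\subseteq T$, then either $e_3\subseteq D$---so $\theta\subseteq D$ is manifestly unknotted---or $e_3\subseteq T-D$, in which case an innermost-disk argument in the punctured torus $T-D$ combined with the unknottedness of $k_1,k_2$ lets us isotope $\theta$ into a disk on $T$. If every $k_i$ is essential, the case (3) procedure still applies and produces $\theta$ isotopic in $T$ to the analogue of $\theta(p,q)$ with $[k_3]=(p,q)$ of maximum $L^1$-norm satisfying $\min(|p|,|q|)\leq 1$; such a theta-curve is ambient isotopic in $S^3$ to a standard planar theta-curve via isotopies that need not preserve $T$, and is therefore unknotted. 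The main obstacle I expect is the wrap-number computation in case (3), where ruling out both $|w|=1$ and $|w|\geq 2$ requires a careful use of the Bezout bounds of Proposition~\ref{prop:ck}; the splitting-sphere construction in case (2) is also delicate but proceeds by the innermost-circle techniques developed earlier.
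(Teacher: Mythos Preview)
Your case~(3) is essentially the paper's Lemma~\ref{lem:esscase}: both cut along a knotted constituent, parameterize the wrap of $e_3$ around the resulting annulus by an integer, and use the ``largest'' constituent to force the wrap to be zero. One quibble: the sentence ``Any knotted $k_i$ satisfies $|p_i|,|q_i|\geq 2$, so this maximum is attained by a knotted constituent'' is a non-sequitur as written---an unknotted $t(1,N)$ can have arbitrarily large $L^1$-norm---though the conclusion is correct once you invoke the explicit formulas~\eqref{cktpqr}, as the paper does. Your case~(2) differs from the paper's: the paper cuts along the \emph{knotted} constituent and observes that $e_3$ inessential in the resulting annulus directly exhibits $\theta=\theta_U\#_2\, t(a,b)$, which is simpler than your splitting-sphere construction around the inessential disk, though yours also works.

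The genuine gap is in case~(1). When $k_3$ is inessential bounding $D\subseteq T$ and $e_3\subseteq T-D$, you assert that an innermost-disk argument isotopes $\theta$ into a disk on $T$. But if $e_3$ is essential in the punctured torus $T-\Int{D}$---equivalently, if $k_1$ and $k_2$ are essential in $T$---then no isotopy on $T$ can place $\theta$ in a disk, and this situation does occur (take $k_3$ a small null-homotopic circle and $e_3$ an arc winding once longitudinally, so that $k_1,k_2\sim\pm l$). Unknottedness of $k_1,k_2$ in $S^3$ does not force them to be inessential on $T$. The paper handles case~(1) via a different split (Lemma~\ref{lem:unknot}): if at least two constituents are inessential, Lemma~\ref{lem:2ck} places $\theta$ in a disk on $T$; otherwise one cuts along an \emph{essential} unknotted constituent $t(1,q)$ and checks each wrap value directly, showing that every value either produces a meridian or longitude constituent (whence $\theta$ is unknotted via a disk in a solid torus, not in $T$) or a knotted constituent (contradiction). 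Your subcase~(1b) is close to this; the fix is to route the ``exactly one inessential'' situation through a cut along an essential constituent rather than the inessential one.
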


\begin{remarks}\label{rem:class}
\noindent
\begin{enumerate}[label=(\arabic*),leftmargin=*]\setcounter{enumi}{0}
\item Prime torus theta-curves---up to isotopy on $T$---are exactly the
$\theta(p,q)$ for relatively prime integers $p$ and $q$ where $\card{p},\card{q}\geq2$.
That follows immediately from Theorems~\ref{thm:prime} and~\ref{thm:classttc}.
\item A torus theta-curve is never a $3$-connected sum of nontrivial theta-curves.
By Theorem~\ref{thm:classttc}, there are three cases to consider.
The first case follows by Lemma~\ref{lem:3sumnoinv},
the second follows by Lemma~\ref{lem:2sumUPrimeKnot3sum} since nontrivial torus knots are prime,
and the third follows by Theorem~\ref{thm:prime}.
\item Given a torus theta-curve $\theta$, it is straightforward to determine which of the three mutually exclusive possibilities
in Theorem~\ref{thm:classttc} holds for $\theta$.
For each constituent knot $k_i$ of $\theta$, compute the signed intersection numbers $I(k_i,l)$ and $I(k_i,m)$
where $l$ is a longitude and $m$ is a meridian of $T$ as in Figure~\ref{fig:torus}.
Those two intersection numbers classify $k_i$ up to isotopy on $T$ (see the end of Section~\ref{sec:conv}).
Note that $k_i$ is inessential in $T$ if and only if both of those intersection numbers are zero.
Now, apply the classification of torus knots to the constituent knots of $\theta$.
\item Theorem~\ref{thm:classttc} combines with results above to completely classify torus theta-curves up
to homeomorphism of $S^3$---allowing reflections---and up to isotopy of $S^3$.
First, all unknotted theta-curves are isotopic in $S^3$.
Second, theta-curves of the form $\theta_U \#_2 K$ are classified by the knot type of $K$, namely:
\begin{equation*}
\begin{aligned}
    \theta_U \#_2 K &\approx \theta_U \#_2 K' &&\Leftrightarrow &&K\approx K'\\
		\theta_U \#_2 K &\sim \theta_U \#_2 K' &&\Leftrightarrow &&K\sim K'
\end{aligned}
\end{equation*}
Proofs of those two facts are straightforward.
(An observation in Section~\ref{sec:conv} is helpful: $\theta\#_2 K \approx \theta_U$ if and only if $K\approx U$ and $\theta\approx\theta_U$.)
Now, apply the classification of torus knots.
Third, Theorem~\ref{classtpq} classified torus theta-curves of the form $\theta(p,q)$
where $p$ and $q$ are relatively prime and $\card{p},\card{q}\geq 2$.
In summary, torus theta-curves are completely classified up to homeomorphism and up to isotopy in $S^3$
by two properties of their constituent knots: their knot types in $S^3$ and whether they are essential in $T$.
\end{enumerate}
\end{remarks}

The remainder of this section is devoted to proving Theorem~\ref{thm:classttc}.
We will state three lemmas, prove Theorem~\ref{thm:classttc} using those lemmas,
and then prove the lemmas.
Let $\theta$ denote an arbitrary theta-curve, typically in $S^3$ but in a surface where indicated.
Let $\Sigma_g$ denote the closed, orientable surface of genus $g\geq 0$.
We state and prove the first lemma for theta-curves in $\Sigma_g$.
While we only use the lemma with $g=1$, our proof works for arbitrary genus and the
general result may be useful for further study of theta-curves.

\begin{lemma}\label{lem:2ck}
Let $\theta\subseteq \Sigma_g$ where $g\geq 0$.
If $\theta$ contains at least two constituent knots that are inessential in $\Sigma_g$,
then there is a constituent knot of $\theta$ that bounds a $2$-disk $\Delta\subseteq\Sigma_g$ containing the third arc of $\theta$.
In particular, all three constituent knots of $\theta$ are inessential in $\Sigma_g$.
\end{lemma}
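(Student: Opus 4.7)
The plan is to use the two disks bounded by the inessential constituent knots and analyze how they sit relative to the third arc. First I would recall that an inessential scc in the closed surface $\Sigma_g$ bounds an embedded $2$-disk. Without loss of generality, assume $k_1=e_2\cup e_3$ and $k_2=e_3\cup e_1$ are inessential, and pick $2$-disks $D_1,D_2\subseteq\Sigma_g$ with $\partial D_1=k_1$ and $\partial D_2=k_2$. Because $\Int{e_1}$ is disjoint from $\partial D_1$ and $\Int{e_2}$ is disjoint from $\partial D_2$, each of the arcs $e_1,e_2$ either lies entirely in the corresponding disk or meets that disk only at the vertices $v_1,v_2$.

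Next I would split into cases. If $e_1\subseteq D_1$, then $D_1$ is a $2$-disk bounded by $k_1$ containing the third arc $e_1$ and we are done. Similarly if $e_2\subseteq D_2$. The remaining nontrivial case is $e_1\cap D_1=\cpa{v_1,v_2}$ and $e_2\cap D_2=\cpa{v_1,v_2}$; in that case I claim $D_1\cap D_2=e_3$, so that $D_1\cup D_2$ is a $2$-disk with boundary $k_3=e_1\cup e_2$ containing the third arc $e_3$.

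The key step is establishing $D_1\cap D_2=e_3$ in the remaining case. Observe that $\Int{D_1}$ is open, connected, and disjoint from $\partial D_2=e_3\cup e_1$, since $e_3\subseteq\partial D_1$ and the case hypothesis gives $e_1\cap D_1\subseteq\partial D_1$. The scc $\partial D_2$ separates $\Sigma_g$ into the two components $\Int{D_2}$ and $\Sigma_g\setminus D_2$, so $\Int{D_1}$ lies entirely in one of them. If it lay in $\Int{D_2}$, then $e_2\subseteq\partial D_1\subseteq D_2$, contradicting the case hypothesis; hence $\Int{D_1}\subseteq\Sigma_g\setminus D_2$, and $D_1\cap D_2=\partial D_1\cap D_2=\pa{e_2\cup e_3}\cap D_2=e_3$. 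Two disks glued along a common arc on their boundaries form a disk, so $D_1\cup D_2$ is as claimed. The ``in particular'' clause is then immediate: the whole theta-curve lies in a single $2$-disk of $\Sigma_g$, so every constituent knot is a scc in a disk and is therefore inessential. The main technical obstacle is the connectedness step forcing $\Int{D_1}$ onto the correct side of $\partial D_2$; the rest is routine bookkeeping of arcs and vertices.
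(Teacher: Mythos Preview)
Your argument is correct and, in fact, more elementary than the paper's. The paper proceeds by lifting to the universal cover $\wt{\Sigma}_g\approx\R^2$: it lifts the two given disks $\Delta_1,\Delta_2$ so as to obtain a lift $\wt{\theta}$ of the entire theta-curve, invokes the planar Schoenflies theorem to find a disk $\wt{\Delta}$ bounded by one constituent knot of $\wt{\theta}$ and containing the third arc, and then applies Epstein's lemma to conclude that $\pi$ is injective on $\wt{\Delta}$, yielding the desired disk $\Delta$ in $\Sigma_g$. Your approach stays entirely inside $\Sigma_g$: after dispensing with the easy cases $e_1\subseteq D_1$ or $e_2\subseteq D_2$, you use the separation property of the inessential curve $\partial D_2$ together with connectedness of $\Int{D_1}$ to force $D_1\cap D_2=e_3$, and then observe that $D_1\cup D_2$ is the required disk. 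This avoids covering-space machinery and Epstein's lemma altogether. The one place worth a sentence of justification is the last step: that $D_1\cup D_2$ is an \emph{embedded} $2$-disk in $\Sigma_g$ (not merely abstractly a disk) follows because $D_1\cap D_2=e_3$ forces $D_1$ and $D_2$ to lie on opposite sides of $e_3$ locally, and the case hypotheses $e_1\cap D_1=\cpa{v_1,v_2}$, $e_2\cap D_2=\cpa{v_1,v_2}$ pin down which sectors $D_1$ and $D_2$ occupy near each vertex. The paper's route, by contrast, gets embeddedness of $\Delta$ for free from Epstein's injectivity statement.
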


\begin{lemma}\label{lem:unknot}
If all three constituent knots of a torus theta-curve $\theta$ are unknotted, then $\theta$ is unknotted.
\end{lemma}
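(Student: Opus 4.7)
The plan is to combine Lemma~\ref{lem:2ck} (applied with $g=1$) with an explicit isotopy across a bigon. First I would establish that every theta-curve $\theta \subseteq T$ has at least one constituent knot that is inessential in $T$. A regular neighborhood $N(\theta) \subseteq T$ is a pair of pants ($\chi=-1$), so $T \setminus N(\theta)$ has $\chi=1$ and total boundary three circles. Counting Euler characteristics of compact orientable surfaces with boundary forces $T \setminus N(\theta)$ to be either a disk $\sqcup$ annulus or two disks $\sqcup$ a once-punctured torus; in either case, at least one component is a disk, whose boundary is isotopic in $T$ to an inessential constituent knot. (In the second case, all three constituents turn out to be inessential.)

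If at least two constituent knots are inessential, I apply Lemma~\ref{lem:2ck} with $g=1$: some constituent bounds a disk $\Delta \subseteq T$ containing the third arc of $\theta$, so $\theta \subseteq \Delta$. Extending $\Delta$ by a disk bounded by $\partial\Delta$ inside one of the two solid tori bounded by $T$ yields an embedded $2$-sphere in $S^3$ containing $\theta$, and $\theta$ is unknotted.

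By the dichotomy in the first paragraph, the remaining case is exactly one inessential constituent, say $k_1 = e_2 \cup e_3$, with the other two essential and unknotted. Then $k_1$ bounds a disk $D_1 \subseteq T$, which is an embedded bigon with sides $e_2, e_3$ meeting at $v_1, v_2$. The third arc $e_1$ lies in $T \setminus \mathrm{int}\, D_1$ (otherwise all three constituents would be inessential, contradicting the case). I would then construct an ambient isotopy of $S^3$ supported in an arbitrarily small neighborhood of $D_1$ together with a thin collar of $D_1$ in one of the solid tori: slide $e_3$ through the interior of $D_1$ toward $e_2$ and, at the last moment, push it slightly off $T$ into that solid torus. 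Throughout, $e_3$ stays disjoint from $e_1$ (which lies on the opposite side of $D_1$) and from $e_2$ (which $e_3$ only approaches after leaving $T$), so the isotopy carries $\theta$ through embedded theta-curves to a theta-curve $\theta' = e_1 \cup e_2 \cup e_3'$ in which $e_3'$ is topologically parallel to $e_2$.

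Finally, as the introduction records around Figure~\ref{fig:trefoilarc}, adjoining to a knot $K$ an arc topologically parallel to an arc of $K$ produces the $2$-connected sum $\theta_U \#_2 K$. Applied to $K = k_3 = e_1 \cup e_2$ with the parallel arc $e_3'$, this gives $\theta' \approx \theta_U \#_2 k_3$. Since $k_3 \approx U$ and $\theta_U \#_2 U \approx \theta_U$ (removing and reinserting an unknotted ball-arc pair is a trivial operation), we conclude $\theta \sim \theta' \approx \theta_U$, so $\theta$ is unknotted. The main obstacle I anticipate is arranging the push of $e_3$ off $T$ so that $\theta$ remains embedded throughout and lands in precisely the configuration to which the Figure~\ref{fig:trefoilarc} observation applies.
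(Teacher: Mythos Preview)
Your Euler-characteristic opening contains a genuine error: a regular neighborhood $N(\theta)\subseteq T$ of a theta-graph is \emph{not} always a pair of pants. The ribbon structure can equally well give a once-punctured torus ($g=1$, $b=1$), and this is exactly what happens for the paper's own examples $\theta(p,q)$: cutting $T$ along $t(p,q)$ gives an annulus, and then cutting along the essential arc $e_3$ leaves a single disk, so $T\setminus N(\theta)$ is one disk with one boundary circle---and that circle is \emph{not} isotopic to any constituent knot. In fact, ``$N(\theta)$ is a pair of pants'' is equivalent to ``some constituent is inessential in $T$,'' so your argument is circular at that point. The claim also fails under the lemma's hypothesis: take $\theta\subseteq T$ with constituent knots $l$, $m$, and $t(-1,-1)$ (for instance $k_3=t(1,1)$ with $e_3$ winding once around the annulus, i.e.\ $q=1$, $r=-1$ in the paper's equations~\eqref{ck1q}). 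All three constituents are unknotted and all three are essential in $T$, so there is no bigon in $T$ for you to slide across.

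Your bigon-slide argument is correct and pleasant when it applies---indeed the slide is not even needed, since $D_1$ already exhibits $e_3$ as parallel to $e_2\subseteq k_3$, giving $\theta\approx\theta_U\#_2 k_3\approx\theta_U$ directly. But the missing case (all constituents essential) still has to be handled, and that is precisely where the paper's proof does its work: it normalizes an essential unknotted constituent to $t(1,q)$, parametrizes $e_3$ by its winding number $r$ in the complementary annulus, and then checks via the explicit formulas~\eqref{ck1q} that for every $r$ either some constituent is isotopic on $T$ to $l$ or $m$ (whence $\theta$ is unknotted) or some constituent is a nontrivial torus knot (contradicting the hypothesis). You could repair your approach by treating the once-punctured-torus case separately---observing that then some $k_i$ must be $\pm l$ or $\pm m$ and using its meridian or longitude disk in a solid torus in place of $D_1$---but establishing that observation amounts to a short computation of the same flavor as the paper's.
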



\begin{corollary}\label{cor:notttc}
If $\theta$ is knotted but all three constituent knots of $\theta$ are unknotted, then $\theta$ cannot lie on $T$.
In particular, Kinoshita's theta-curve in Figure~\ref{fig:kinoshita} is not a torus theta-curve.
\end{corollary}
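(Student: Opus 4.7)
The plan is to obtain this corollary as an immediate contrapositive application of Lemma~\ref{lem:unknot}. Specifically, I would argue as follows: suppose for contradiction that $\theta$ lies on $T$. Then $\theta$ is a torus theta-curve all of whose constituent knots are unknotted, so Lemma~\ref{lem:unknot} applies and forces $\theta$ itself to be unknotted. This contradicts the hypothesis that $\theta$ is knotted.

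For the second sentence, I would invoke the previously established properties of Kinoshita's theta-curve. Recall from the introduction and the discussion following Figure~\ref{fig:kinoshita} that Kinoshita's theta-curve is nontrivial (citing Kinoshita and Ozawa--Taylor) and that its three constituent knots are all unknotted. These two facts are exactly the hypotheses of the first sentence, so the first sentence applied to Kinoshita's theta-curve yields that it does not lie on any standard torus in $S^3$, i.e., it is not a torus theta-curve.

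Since the entire content of the corollary is packaged in the contrapositive of Lemma~\ref{lem:unknot} together with the already-known properties of Kinoshita's theta-curve, there is essentially no obstacle here; the corollary is a clean restatement for emphasis and for the sake of the headline application. The only thing to be careful about is phrasing: making sure to note that ``cannot lie on $T$'' means cannot lie on any standard torus in $S^3$ (up to ambient isotopy), since Lemma~\ref{lem:unknot} is stated for the fixed standard torus $T$ but applies to any standard torus by the homogeneity of the class of standard tori in $S^3$ under ambient isotopy.
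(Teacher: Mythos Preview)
Your proposal is correct and matches the paper's approach: the corollary is stated without proof in the paper, indicating it follows immediately from Lemma~\ref{lem:unknot} exactly as you describe. Your added remark about the phrasing of ``cannot lie on $T$'' is reasonable but not strictly necessary, since the paper has fixed $T$ as the standard torus throughout.
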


\begin{remark}
A torus theta-curve may have two unknotted constituent knots and still be knotted.
Proposition~\ref{prop:ck} implies that for each positive integer $n$, $\theta(2,2n+1)$ is an example.
In fact, those are all such examples of the form $\theta(p,q)$ where $p$ and $q$ are relatively prime integers and $2\leq p <q$.
To see that, note that in Proposition~\ref{prop:ck}, $j=1$ and $p-1=1$ yield the examples $\theta(2,2n+1)$.
And, $j=1$ and $q-k=1$ is not possible since then the key equation $1=jq-kp$ becomes $1=q-(q-1)p$.
The latter implies $q(p-1)=p-1$, a contradiction.
\end{remark}

Next, we define a collection of torus theta-curves, denoted $\theta(p,q,r)$,
where $p$ and $q$ are relatively prime integers, $p,q\geq2$, and $r$ is an integer.
They generalize torus theta-curves $\theta(p,q)$ since $\theta(p,q,0)=\theta(p,q)$.
\begin{figure}[htbp!]
    \centerline{\includegraphics[scale=1.0]{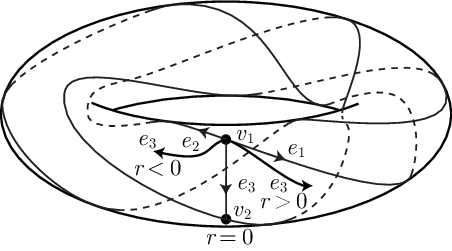}}
    \caption{Torus theta-curve $\theta(p,q,r)$ where $p$ and $q$ are relatively prime integers, $p,q\geq2$, and $r$ is an integer.
		Also indicated are various possibilities for the essential arc $e_3$ determined by the integer $r$.}
\label{fig:torustcr}
\end{figure}
Define $\theta(p,q,r)$ in the same way that $\theta(p,q)$ was defined in Section~\ref{sec:conv} except now $e_3$
winds $r\in\Z$ times around the annulus $A$ obtained by cutting $T$ along $t(p,q)$ as in Figure~\ref{fig:torustcr}.
Note that $e_3$ has a boundary point in each of the two boundary circles of $A$.
So, $e_3$ is essential in $A$.
Proposition~\ref{prop:ck} and the signed intersection numbers $I(k_i,l)$ and $I(k_i,m)$ determine the constituent knots of $\theta(p,q,r)$.
As in Proposition~\ref{prop:ck}, define the integers $j$ and $k$ by:
\begin{alignat*}{3}
\br{j}_p &= \br{q}_p^{-1} && \in \Z_p \quad && \tn{where } 1\leq j\leq p-1\\
\br{k}_q &=\br{-p}_q^{-1} && \in \Z_q \quad && \tn{where } 1\leq k\leq q-1
\end{alignat*}
The constituent knots of $\theta(p,q,r)$ are:
\begin{equation}\label{cktpqr}
\begin{alignedat}{2}
k_3 &= e_1-e_2 &&= t(p,q)\\
k_2 &= e_3-e_1 &&= t(rp-j,rq-k)\\
k_1 &= e_2-e_3 &&= t(j-(r+1)p,k-(r+1)q)
\end{alignedat}
\end{equation}

\begin{lemma}\label{lem:esscase}
If $\theta$ is a torus theta-curve, $\theta$ contains a knotted constituent knot, and each constituent knot of $\theta$
is essential in $T$, then $\theta$ is isotopic on $T$ to $\theta(p,q)$ for some
relatively prime integers $p$ and $q$ where $\card{p},\card{q}\geq2$.
\end{lemma}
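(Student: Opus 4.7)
The plan is to normalize $\theta$ on $T$ in stages. First, since $\theta$ is unlabeled and some constituent knot of $\theta$ is knotted, I would relabel the edges of $\theta$ so that $k_3$ is the knotted one. Then $k_3 \sim t(p,q)$ on $T$ for relatively prime integers $p, q$ with $\card{p}, \card{q} \geq 2$, and by the classification of torus knots on $T$ up to isotopy there is an ambient isotopy of $T$ (extending to one of $S^3$) that brings $k_3$ to coincide with the standard $t(p,q)$.

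Next, cut $T$ along $k_3$ to obtain an annulus $A$, and regard $e_3$ as a neatly embedded arc in $A$ with endpoints $v_1, v_2 \in \partial A$. The first key step is to show $e_3$ is essential in $A$. If not, then $e_3$ and an arc $\alpha \subseteq \partial A$ between $v_1$ and $v_2$ cobound a disk in $A$; since $\alpha$ is a subarc of $k_3 = e_1 \cup e_2$ joining $v_1$ and $v_2$, it equals either $e_1$ or $e_2$. That disk exhibits a rel-endpoints isotopy in $T$ from $e_3$ to $e_1$ or to $e_2$, forcing $k_2 = e_3 - e_1$ or $k_1 = e_2 - e_3$ to be nullhomotopic and hence inessential in $T$---contradicting the hypothesis. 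Essential arcs in $A$ with fixed endpoints on opposite boundary circles are classified up to rel-endpoints isotopy by a winding number $r \in \Z$, so $\theta$ is isotopic on $T$ (by isotoping $e_3$ inside $A$) to $\theta(p,q,r)$ for some $r$.

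If $r = 0$ we are done. Otherwise, I use the formulas \eqref{cktpqr} for the other constituent knots: $k_2 = t(rp - j, rq - k)$ and $k_1 = t(j - (r+1)p, k - (r+1)q)$. Each is primitive via the identity $p(rq - k) - q(rp - j) = jq - kp = 1$. A short case analysis using $\card{p}, \card{q} \geq 2$, $r \neq 0$, and $jq - kp = 1$ shows that at least one of $k_1, k_2$ is a knotted torus knot $t(p', q')$ with $\card{p'}, \card{q'} \geq 2$. I then relabel the three arcs of $\theta$ so that the two arcs whose union is this knotted constituent knot become the new $e_1, e_2$---so that the new $k_3$ is $t(p', q')$---and the remaining arc becomes the new $e_3$. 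To finish, one identifies this relabeled configuration with the standard $\theta(p', q')$, appealing to the symmetry relations~\eqref{symmtcc} to absorb sign choices.

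The main obstacle is this final identification: verifying that, after the relabeling, the new third arc has winding number zero around the new annulus $T \setminus t(p', q')$, so the relabeled $\theta(p,q,r)$ really is the standard $\theta(p', q')$ rather than some $\theta(p', q', r')$ with $r' \neq 0$. A clean way to check this is via the universal cover $\R^2$ of $T$: lifting the three arcs of $\theta(p,q,r)$ from a common lift of $v_1$ produces three lifted endpoints of $v_2$, and these---after a suitable deck translation and a relabeling---match the corresponding triple for $\theta(p', q')$ determined by Proposition~\ref{prop:ck}. Isotopy in $T$ then follows since two embedded arcs on a surface with common endpoints and the same rel-endpoints homotopy class are rel-endpoints isotopic.
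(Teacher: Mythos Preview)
Your first two stages match the paper's argument: normalize so that a knotted constituent knot is $k_3=t(p,q)$, cut $T$ along $k_3$, observe $e_3$ is essential in the resulting annulus $A$ (your justification via the hypothesis that all constituent knots are essential is a nice explicit touch), and conclude that $\theta$ is isotopic on $T$ to some $\theta(p,q,r)$.

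The gap is in your final step. When $r\neq 0$ you choose \emph{some} knotted constituent knot among $k_1,k_2$ and relabel it as the new $k_3'=t(p',q')$, then assert that the new winding number is $0$, i.e., that the relabeled curve is $\theta(p',q')$. This is not true in general: it depends on \emph{which} knotted constituent you pick. For a concrete counterexample, take $\theta(2,3,2)$. Here $j=k=1$, so the constituent knots are $k_3=t(2,3)$, $k_2=t(3,5)$, $k_1=t(-5,-8)$; both $k_1$ and $k_2$ are knotted. If you relabel so that $k_2=t(3,5)$ becomes the new $k_3'$, then the resulting theta-curve has constituent knots $\{t(3,5),t(2,3),t(5,8)\}$, and comparing with~\eqref{cktpqr} for $(p',q')=(3,5)$ one finds this is $\theta(3,5,-1)$, not $\theta(3,5,0)$. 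The symmetry relations~\eqref{symmtcc} only change signs of $p',q'$ and cannot fix this. Your proposed universal-cover verification would, if carried out, simply reveal that the lifted endpoints do \emph{not} match those of $\theta(p',q')$ for this choice.

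The paper's remedy is to relabel so that the new $k_3$ is the \emph{largest} constituent knot, meaning the $t(a,b)$ maximizing $\card{a}+\card{b}$. Direct inspection of~\eqref{cktpqr} shows that for $r>0$ the largest is $k_1$, for $r<0$ it is $k_2$, and in either case it is knotted; conversely, $k_3$ is the largest precisely when $r=0$. Hence after one relabeling to the largest, the new $r$ is forced to be $0$. This ``largest'' criterion is the missing idea in your plan; once you insert it, your proposed verification becomes unnecessary.
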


We now use the three lemmas to prove Theorem~\ref{thm:classttc}.

\begin{proof}[Proof of Theorem~\ref{thm:classttc}]
If all three constituent knots of $\theta$ are unknotted, then $\theta$ is unknotted by Lemma~\ref{lem:unknot}.
Otherwise, $\theta$ contains at least one knotted constituent knot.
Without loss of generality, assume $k_3=e_1-e_2$ is knotted.
Then, $k_3=t(a,b)$ for some relatively prime integers $a$ and $b$ where $\card{a},\card{b}\geq 2$.
Let $A$ denote the annulus obtained by cutting $T$ along $k_3$.
As $k_3$ is essential in $T$, $\theta$ has a constituent knot inessential in $T$ if and only if $e_3$ is inessential in $A$.
So, if $\theta$ has a constituent knot inessential in $T$, then $\theta=\theta_U \#_2\, t(a,b)$, as desired.
Otherwise, each constituent knot of $\theta$ is essential in $T$ and the result follows by Lemma~\ref{lem:esscase}.
\end{proof}

It remains to prove the three lemmas.

\begin{proof}[Proof of Lemma~\ref{lem:2ck}]
If $g=0$, then the result follows by the $2$-dimensional Schoenflies theorem~\cite[p.~71]{moise}.
So, assume $g>0$.
Let $\pi:\wt{\Sigma}_g\to\Sigma_g$ be the universal covering map where $\wt{\Sigma}_g\approx\R^2$.
Without loss of generality, assume $k_1=e_2-e_3$ and $k_2=e_3-e_1$ are inessential in $\Sigma_g$.
Let $\Delta_1$ and $\Delta_2$ be the $2$-disks in $\Sigma$ bounded by $k_1$ and $k_2$ respectively.
If $\Delta_1$ contains $e_1$ or $\Delta_2$ contains $e_2$, then we are done.
So, assume $e_1$ does not lie in $\Delta_1$ and $e_2$ does not lie in $\Delta_2$.
The $2$-disks $\Delta_1$ and $\Delta_2$ are given by embeddings $f_1:D^2\to \Sigma_g$ and $f_2:D^2\to \Sigma_g$ respectively.
Lift $f_1$ to $\wt{f}_1:D^2\to \wt{\Sigma}_g$ so $f_1=\pi\circ \wt{f}_1$.
Note that $\wt{f}_1$ is an embedding, and the restriction of $\wt{f}_1$ to $\partial D^2$ is a lift of $k_1$ to a scc in $\wt{\Sigma}_g$.
Let $\wt{v}_1$ and $\wt{v}_2$ denote the lifts under $f_1$ of $v_1$ and $v_2$ respectively.
Lift $f_2$ to $\wt{f}_2:D^2\to \wt{\Sigma}_g$ so $f_2=\pi\circ \wt{f}_1$ and $v_1$ lifts to $\wt{v}_1$ (see Moise~\cite[p.~176]{moise}).
Note that $\wt{f}_2$ is an embedding, and the restriction of $\wt{f}_2$ to $\partial D^2$ is a lift of $k_2$ to a scc in $\wt{\Sigma}_g$.
Restricting $f_1$ and $f_2$ to arcs in the boundary of $D^2$ corresponding to $e_3$,
uniqueness of lifts implies that $f_2$ lifts $v_2$ to $\wt{v}_2$.
Thus, we may restrict $f_1$ and $f_2$ to $\partial D^2$ and paste together those restrictions to obtain
a lift $\wt{\theta}$ of $\theta$ embedded in $\wt{\Sigma}_g$.
Note that $\pi$ restricted to $\wt{\theta}$ is injective.
As $\wt{\Sigma}_g\approx\R^2$, Moise~\cite[pp.~20~\&~68]{moise} implies that exactly one constituent knot of $\wt{\theta}$ bounds a $2$-disk
$\wt{\Delta}\subseteq\wt{\Sigma}_g$ containing the third arc of $\wt{\theta}$.
As $\pi$ restricted to $\partial \wt{\Delta}$ is injective, Lemma~1.6 of Epstein~\cite{epstein} implies that
$\pi$ restricted to $\wt{\Delta}$ is injective.
Define $\Delta=\pi\pa{\wt{\Delta}}$ which is a $2$-disk embedded in $\Sigma$, bounded by a constituent knot of $\theta$,
and containing the third arc of $\theta$.
Recall that $e_1$ does not lie in $\Delta_1$, $e_2$ does not lie in $\Delta_2$,
and each inessential scc in $\Sigma_g$ (where $g>0$) bounds a unique $2$-disk in $\Sigma_g$.
Hence, $\partial \Delta = k_3=e_1-e_2$ and $\Delta$ contains $e_3$, as desired.
\end{proof}

\begin{proof}[Proof of Lemma~\ref{lem:unknot}]
If at least two constituent knots of $\theta$ are inessential in $T$, then $\theta$ lies in a $2$-disk in $T$ by Lemma~\ref{lem:2ck}
and hence is unknotted.
So, assume $\theta$ contains a constituent knot that is essential in $T$.
Recall the standard longitude $l$ and meridian $m$ of $T$ in Figure~\ref{fig:toruslm}.
Evidently, if a constituent knot of $\theta$ is isotopic in $T$ to $m$ or to $l$,
then $\theta$ is unknotted.
By the classification of torus knots and the relations~\eqref{symmtk},
it remains, without loss of generality, to consider the case $k_3=e_1-e_2=t(1,q)$ where $q$ is a positive integer.
Let $A$ denote the annulus obtained by cutting $T$ along $k_3$.
If $e_3$ is inessential in $A$, then $\theta$ is unknotted.
So, assume $e_3$ is essential in $A$.
That implies $e_3$ has a boundary point in each of the two boundary circles of $A$.
Hence, up to isotopy on $T$ fixing $k_3$ pointwise, $e_3$ is determined by an integer $r$ that records the number of times
$e_3$ winds around $A$ as in Figure~\ref{fig:torust1q}.
\begin{figure}[htbp!]
    \centerline{\includegraphics[scale=1.0]{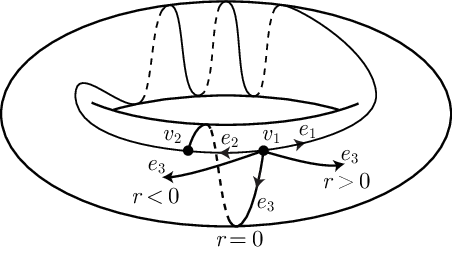}}
    \caption{Constituent knot $k_3=e_1-e_2=t(1,q)$ where $q$ is a positive integer ($q=3$ is depicted) and various
		possibilities for the essential arc $e_3$ determined by the integer $r$.}
\label{fig:torust1q}
\end{figure}
Note that for any integer $r$, $e_3$ always terminates at $v_2$ \emph{on the same side} of $k_3$ since $e_3$ is essential in $A$.
The signed intersection numbers $I(k_i,l)$ and $I(k_i,m)$ determine the constituent knots of $\theta$:
\begin{equation}\label{ck1q}
\begin{alignedat}{2}
k_3 &= e_1-e_2 &&= t(1,q)\\
k_2 &= e_3-e_1 &&= t(r-1,q(r-1)+1)\\
k_1 &= e_2-e_3 &&= t(-r,-qr-1)
\end{alignedat}
\end{equation}
In each case $q\in\Z_+$ and $r\in\Z$, either a constituent knot of $\theta$ is isotopic on $T$ to $m$ or $l$ implying
$\theta$ is unknotted, or a constituent knot of $\theta$ is knotted which contradicts the hypotheses.
In more detail, if $r=0$, then $k_1=m$.
If $r=1$, then $k_2=m$.
If $r\geq2$, then $k_1$ is knotted.
If $r=-1$ and $q=1$, then $k_1=l$.
If $r=-1$ and $q\geq2$, then $k_2$ is knotted.
If $r\leq-2$, the $k_2$ is knotted.
\end{proof}

\begin{proof}[Proof of Lemma~\ref{lem:esscase}]
By hypothesis, $\theta\subseteq T$ has a constituent knot $t(p',q')$ where $p'$ and $q'$ are relatively prime integers and $\card{p'},\card{q'}\geq2$.
The desired conclusion is invariant under reflection of $S^3$.
So, we can and do assume $p',q'\geq2$.
Without loss of generality, assume $k_3=t(p',q')$.
Let $A$ denote the annulus obtained by cutting $T$ along $k_3$.
By hypothesis, each constituent knot of $\theta$ is essential in $T$.
Thus, $e_3$ is essential in $A$ and $e_3$ has a boundary point in each of the two boundary circles of $A$.
Up to isotopy on $T$ fixing $k_3$ pointwise, $e_3$ is determined by an integer $r'$ that records the number of times
$e_3$ winds around $A$ as in Figure~\ref{cktpqr}.
So, $\theta=\theta(p',q',r')$ where $p'$ and $q'$ are relatively prime integers, $p',q'\geq 2$, and $r'$ is an integer.
If $r'=0$, then we're done.
So, assume $r'\ne 0$.

The constituent knots of $\theta(p',q',r')$ were determined in~\eqref{cktpqr}.
Among those three torus knots $t(a,b)$, there is one that is ``largest'' in the sense that $\card{a}+\card{b}$ is maximal.
Direct inspection of~\eqref{cktpqr} shows that $k_1$ is largest when $r'>0$ and $k_2$ is largest when $r'<0$.
Furthermore, direct inspection of~\eqref{cktpqr} shows that in both of those cases the largest constituent knot is knotted.
Isotop $\theta(p',q',r')$ on $T$ to $\theta(p,q,r)$ so that the largest constituent knot of $\theta(p',q',r')$ is carried to $k_3$ of $\theta(p,q,r)$.
As the largest constituent knot of $\theta(p',q',r')$ is knotted, we have that $p$ and $q$ are relatively prime integers, $p,q\geq 2$, and $r$ is an integer.
Now, let $k_i$ refer to a constituent knot of $\theta(p,q,r)$.
As $k_3$ is the largest constituent knot of $\theta(p,q,r)$,
direct inspection of~\eqref{cktpqr} implies that $r=0$, as desired.
\end{proof}


\begin{thebibliography}{99}

\bibitem[BBOMT25]{taylor}
K.~Baker, D.~Buck, A.~Moore, D.~O'Donnol, and S.~Taylor,
\emph{Primality of theta-curves with proper rational tangle unknotting number one},
Trans. Amer. Math. Soc. Ser. B \textbf{12} (2025), 276--297.


\bibitem[BZ03]{bz}
G.~Burde and H.~Zieschang,
\emph{Knots},
De Gruyter Studies in Mathematics,
Walter de Gruyter \& Co., Berlin,
2003.


\bibitem[CMB16]{cmb}
J.S.~Calcut and J.R.~Metcalf-Burton,
\emph{Double branched covers of theta-curves},
J. Knot Theory Ramifications \textbf{25} (2016), 9 pp.

\bibitem[Cer68]{cerf}
J.~Cerf,
\emph{Sur les diff\'eomorphismes de la sph\`ere de dimension trois $(\Gamma_{4}=0)$},
Lecture Notes in Mathematics \textbf{53},
Springer-Verlag, Berlin, 1968.

\bibitem[Eps66]{epstein}
D.B.A.~Epstein,
\emph{Curves on $2$-manifolds and isotopies},
Acta Math. \textbf{115} (1966), 83--107.
   
	

\bibitem[Hat00]{hatcher3d}
A.~Hatcher,
\emph{Notes on basic 3-manifold topology},
available at
\href{http://www.math.cornell.edu/~hatcher/3M/3M.pdf}{\curl{http://www.math.cornell.edu/$\sim$hatcher/3M/3M.pdf}}, 2000.

%

\bibitem[Hir76]{hirsch}
M.W.~Hirsch,
\emph{Differential topology},
Springer--Verlag,
New York, 1994 (Corrected reprint of the 1976 original).

\bibitem[JKLMTZ16]{brunnian}
B.~Jang,
A.~Kronaeur,
P.~Luitel,
D.~Medici,
S.A.~Taylor, and
A.~Zupan,
\emph{New examples of Brunnian theta graphs},
Involve \textbf{9} (2016), 857--875.

\bibitem[KSWZ93]{kauffman}
L.~Kauffman, J.~Simon, K.~Wolcott, and P.~Zhao,
\emph{Invariants of theta-curves and other graphs in $3$-space},
Topology Appl. \textbf{49} (1993), 193--216.

\bibitem[Kin58]{kinoshita58}
S.~Kinoshita,
\emph{Alexander polynomials as isotopy invariants. I},
Osaka Math. J. \textbf{10} (1958), 263--271.

\bibitem[Kin72]{kinoshita72}
S.~Kinoshita,
\emph{On elementary ideals of polyhedra in the $3$-sphere},
Pacific J. Math. \textbf{42} (1972), 89--98.

\bibitem[Lic81]{lickorish}
W.B.~Lickorish,
\emph{Prime knots and tangles},
Trans. Amer. Math. Soc. \textbf{267} (1981), 321--332.

\bibitem[MT11]{matveevturaev}
S.~Matveev and V.~Turaev,
\emph{A semigroup of theta-curves in 3-manifolds},
Mosc. Math. J. \textbf{11} (2011), 805--814, 822.

\bibitem[Moi77]{moise}
E.~E.~Moise,
\emph{Geometric topology in dimensions $2$ and $3$},
Springer-Verlag, New York-Heidelberg, 1977.

\bibitem[Mor09]{moriuchi}
H.~Moriuchi,
\emph{An enumeration of theta-curves with up to seven crossings},
J. Knot Theory Ramifications \textbf{18} (2009), 167--197.

\bibitem[Mot98]{motohashi98}
T.~Motohashi,
\emph{A prime decomposition theorem for {$\theta_n$}-curves in {$S^3$}},
Topology Appl. \textbf{83} (1998), 203--211.

\bibitem[Mun60]{munkres}
J.~Munkres,
\emph{Differentiable isotopies on the $2$--sphere},
Michigan Math. J. \textbf{7} (1960), 193--197.

\bibitem[Mur08]{mur}
K.~Murasugi,
\emph{Knot theory \& its applications},
Modern Birkh\"auser Classics,
Translated from the 1993 Japanese original by Bohdan Kurpita;
Reprint of the 1996 translation,
Birkh\"auser Boston, Inc., Boston, MA, 2008.

\bibitem[OT19]{otknotted}
M.~Ozawa and S.A.~Taylor,
\emph{Two more proofs that the Kinoshita graph is knotted},
Amer. Math. Monthly \textbf{126} (2019), 352--357.


\bibitem[Sch01]{scharlemann}
M.~Scharlemann,
\emph{The Goda-Teragaito conjecture: an overview},
in \emph{On Heegaard splittings and Dehn surgeries of 3-manifolds, and
   topics related to them} (Kyoto, 2001),
S\=urikaisekikenky\=usho K\=oky\=uroku,
1229 (2001), 87--102.


\bibitem[Sma59]{smale}
S.~Smale,
\emph{Diffeomorphisms of the $2$--sphere},
Proc. Amer. Math. Soc. \textbf{10} (1959), 621--626.

\bibitem[Sti93]{stillwell}
J.~Stillwell,
\emph{Classical topology and combinatorial group theory},
Springer-Verlag, New York, 1993.


\bibitem[Thu97]{thurston}
W.P.~Thurston,
\emph{Three-dimensional geometry and topology. Vol. 1},
Edited by Silvio Levy, Princeton University Press, Princeton, NJ, 1997.

\bibitem[Tur12]{turaev}
V.~Turaev,
\emph{Knotoids},
Osaka J. Math. \textbf{49} (2012), 195--223.

\bibitem[Wol87]{wolcott}
K.~Wolcott,
\emph{The knotting of theta curves and other graphs in $S^3$},
Geometry and topology (Athens, Ga., 1985),
Lecture Notes in Pure and Appl. Math., vol. 105,
Dekker, New York, 1987, 325--346.

\end{thebibliography}
\end{document}